\documentclass[letterpaper,10pt]{amsart}
\usepackage[english]{babel}
\usepackage{indentfirst} 
\usepackage{amssymb}
\usepackage{amsmath}
\usepackage{mathabx} 
\usepackage{amsthm}
\usepackage{thmtools}
\usepackage{mathtools}
\usepackage{multirow} 
\usepackage{bbm}             
\usepackage[usenames,dvipsnames]{xcolor} 
\usepackage{comment}
\usepackage{graphicx}
\usepackage[all]{xy} 

\newtheorem{theorem}{Theorem}[section]
\newtheorem{definition}[theorem]{Definition}
\newtheorem{proposition}[theorem]{Proposition}

\newtheorem{lemma}[theorem]{Lemma}

\newtheorem{remark}[theorem]{Remark}

\newtheorem*{theorem-non}{Theorem}

\theoremstyle{definition}

\def\PSL{\operatorname{PSL}}

\def\cov{\operatorname{Cov^Q_0}}
\def\supp{\operatorname{supp}}
\def\Per{\operatorname{Per}}
\def\critval{\operatorname{CritVal}}
\def\critpt{\operatorname{CritPt}}

\def\graph{\operatorname{Gr}}
\def\bif{\operatorname{BIF}}
\def\interior{\operatorname{int}}

\newcommand\restr[2]{{
  \left.\kern-\nulldelimiterspace 
  #1 
  \vphantom{\big|} 
  \right|_{#2} 
  }}

\def\F{\mathcal{F}_a}
\def\J{\operatorname{J}_a}
\def\m{\mu_+}
\def\E{\mathcal{E}_a}
\def\f{f_a}

\usepackage{pgfplots}
\pgfplotsset{compat=1.15}
\usepackage{mathrsfs}
\usetikzlibrary{arrows}
\pagenumbering{arabic}


\begin{document}

\title[Equidistribution of $\F$]{Equidistribution for matings of quadratic maps with the Modular group}
\date{\today}
\author[V. Matus de la Parra]{V. Matus de la Parra}

\address{718 Hylan Building, University of Rochester}

\begin{abstract}
We study the asymptotic behavior of the family of holomorphic correspondences $\lbrace\F\rbrace_{a\in\mathcal{K}}$, given by
$$\left(\frac{az+1}{z+1}\right)^2+\left(\frac{az+1}{z+1}\right)\left(\frac{aw-1}{w-1}\right)+\left(\frac{aw-1}{w-1}\right)^2=3.$$
It was proven by Bullet and Lomonaco that $\F$ is a mating between the modular group $\PSL_2(\mathbb{Z})$ and a quadratic rational map. We show for every $a\in\mathcal{K}$, the iterated images and preimages under $\F$ of nonexceptional points equidistribute, in spite of the fact that $\F$ is weakly-modular in the sense of Dinh, Kaufmann and Wu but it is not modular. Furthermore, we prove that periodic points equidistribute as well.
\end{abstract}
\maketitle

\section{Introduction}
In 1965, Brolin \cite{Bro65} studied asymptotic properties of polynomials $P(z)\in\mathbb{C}[z]$ of degree bigger than or equal to $2$. He proved the existence of a probability measure for which the preimages $P^{-n}(z_0)$ at time $n$ of any point $z_0\in\mathbb{C}$ (with at most one exception) asymptotically equidistribute, as $n$ tends to infinity. In 1983, Freire, Lopes and Ma\~n\'e \cite{FreLopMan83}, and Ljubich \cite{Lju83}, independently proved the generalization to rational maps of degree at least $2$ on the Riemann sphere. These results have been generalized to different settings. For instance, see \cite[Section 1.4]{DinSib10} and the references therein for higher dimensions, and \cite{FavRiv10} and \cite{Gig14} for the non-archimedian setting.

The equidistribution properties of holomorphic correspondences have also attracted considerable interest. Roughly speaking, a \emph{holomorphic correspondence} on a complex manifold $X$ is a multivalued map induced by a formal sum $\Gamma=\sum n_i\Gamma(i)$ of complex varieties $\Gamma(i)\subset X\times X$ of the same dimension. The multivalued map sends $z$ to $w$ if $(z,w)$ belongs to some $\Gamma(i)$ (see Section \ref{hol}). Let $d(F)$ denote the number of pre-images of a generic point under $F$. We call this number the \emph{topological degree} of $F$, just as in the case of rational maps. We study the existence of a Borel probability measure $\mu$ on $X$ that has the property that for all but at most finitely many $z_0\in X$,
$$\frac{1}{d(F)^n}(F^n)_*\delta_{z_0}\to\mu,$$
as $n\to\infty$. Here $(F^n)_*$ denotes the pushforward operator associated to $F^n$. In \cite{Din05}, Dinh studied the case of polynomial correspondences whose Lojasiewicz exponent is strictly bigger than $1$, in which case we always have that $d(F^{-1})<d(F)$. The case where $d(F)=d(F^{-1})$ is open but some cases are known. For instance, Clozel, Oh and Ullmo \cite{CloOhUll01} proved equidistribution for irreducible modular correspondences, Clozel and Otal \cite{CloOta01} proved it for exterior modular correspondences, and Clozel and Ullmo \cite{CloUll03} for those that are self-adjoint. On the other hand, Dinh, Kaufmann and Wu proved in \cite{DinKauWu20} that if such $F$ is not weakly-modular, then the statement holds for both $F$ and $F^{-1}$. Modular correspondences are weakly-modular, but the reverse containment does not hold. On a different classification, Bharali and Sridharan \cite{bhasri16} proved equidistribution for correspondences with $d(F)\geq d(F^{-1})$ having a repeller in the sense of \cite{McG92}. We will study a $1$-parameter family in the gap between weak-modularity and modularity, and for which the result in \cite{bhasri16} does not apply.
\\

Our object of study is the family of correspondences $\lbrace \F\rbrace_{a\in\mathcal{K}}$ on the Riemann sphere $\widehat{\mathbb{C}}$, where $\F$ is given in affine coordinates by
\begin{equation}\label{equ}
\left(\frac{az+1}{z+1}\right)^2+\left(\frac{az+1}{z+1}\right)\left(\frac{aw-1}{w-1}\right)+\left(\frac{aw-1}{w-1}\right)^2=3,
\end{equation}
and $\mathcal{K}$ is the \emph{Klein combination locus} defined in Section \ref{thefamily}. This family was studied by Bullett et al. in \cite{BulHar00}, \cite{BulLom20I}, \cite{BulLom20II} and \cite{BulPen94}. In \cite{BulLom20I}, Bullett and Lomonaco proved that there is a two sided restriction $\f$ of $\F$ that is hybrid equivalent to a quadratic rational map $P$ that has a fixed point with multiplier $1$. We refer the reader to \cite{Lom15} for conjugacy of parabolic-like mappings. Moreover, for the parameters for which the Julia set of $P$ is connected, we have that $\F$ is a mating between the rational map $P$ and the modular group $\PSL_2(\mathbb{Z})$. This generalizes a previous result by Bullett and Penrose \cite{BulPen94}. The correspondence $\F$ has two homeomorphic copies of $K_P$, denoted $\Lambda_{a,-}$ and $\Lambda_{a,+}$, and they satisfy that $\F^{-1}(\Lambda_{a,-})=\Lambda_{a,-}$ and $\F(\Lambda_{a,+})=\Lambda_{a,+}$. These are called \emph{backward} and \emph{forward limit set}, respectively (see Section \ref{limitsets}).
\\

The following theorem states that this family does not fit the conditions for any of the equidistribution results listed above (see Section \ref{notes}).

\begin{theorem}\label{A} For every $a\in\mathcal{K}$, we have that
\begin{itemize}
\item[1.] $\F$ is a weakly-modular correspondence that is not modular, and
\item[2.]\label{nonrepeller} $\partial\Lambda_{a,-}$ is not a repeller for $\F$. 
\end{itemize}
\end{theorem}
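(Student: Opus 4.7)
The plan is to treat each part separately, using the Bullett--Lomonaco mating description of $\F$ as the main structural input, together with standard local theory for parabolic fixed points.

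For part 1, to establish weak-modularity I would verify the definition from \cite{DinKauWu20} directly. The key object is an invariant probability measure on $\widehat{\mathbb{C}}$ that behaves symmetrically under both $\F$ and $\F^{-1}$: the natural candidate is the dynamical measure furnished by the mating, which inherits the $\PSL_2(\mathbb{Z})$-invariant Patterson--Sullivan-type structure from the modular-group side and the measure of maximal entropy of $P$ (transported through the hybrid conjugacy) from the rational-map side. These two pieces match along the equator of the mating, yielding a single measure that meets the balance conditions asked of a weakly-modular correspondence. To show that $\F$ is not modular, I would observe that the defining equation \eqref{equ} is not symmetric under $z\leftrightarrow w$ (the numerators $az+1$ and $aw-1$ differ by a sign), so $\F\neq\F^{-1}$ and $\Lambda_{a,-}\neq\Lambda_{a,+}$. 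Since modular correspondences in the strict sense are modeled on rigid algebraic quotient structures (as with Hecke operators on modular curves), and the hybrid conjugacy identifies $\f$ with a member of a continuous family of quadratic rational maps carrying a parabolic fixed point, the flexible parameter $a\in\mathcal{K}$ and the presence of a parabolic point together rule out such a rigid description.

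For part 2, the hybrid conjugacy transports the parabolic fixed point of $P$ (multiplier $1$) to a point $p\in\partial\Lambda_{a,-}$ at which one branch of $\F$ is tangent to the identity. The Leau--Fatou flower theorem then provides an attracting petal $\mathcal{P}$ at $p$: inside $\mathcal{P}$, $\F$-orbits converge to $p$ while staying off $\partial\Lambda_{a,-}$. This means that $\F^{-1}$ cannot strictly contract any neighborhood of $p$ into a smaller one, so no compact neighborhood $U$ of $\partial\Lambda_{a,-}$ can satisfy $\overline{\F^{-1}(U)}\subset U$. This directly violates the isolating-block condition in McGehee's definition from \cite{McG92}, hence $\partial\Lambda_{a,-}$ is not a repeller.

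The main obstacle, in my view, lies in part 1: specifically, pinning down the precise form of weak-modularity appropriate to a correspondence on $\widehat{\mathbb{C}}$ and verifying that the mating measure indeed satisfies it (the DKW framework is originally formulated on $\mathbb{P}^k$ with potential-theoretic hypotheses, which require careful translation). The non-repeller assertion, by contrast, should fall out quickly once the parabolic fixed point of $P$ is correctly identified on $\partial\Lambda_{a,-}$, since it is essentially the classical fact that parabolic Julia sets fail to be repellers in the sense of topological dynamics.
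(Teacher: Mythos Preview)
Your argument that $\F$ is not modular has a genuine gap. The observation that $\F\neq\F^{-1}$ does not rule out modularity: in Definition~\ref{modular} the correspondence $F_g$ is induced by $x\mapsto(x,gx)$, so $F_g^{-1}=F_{g^{-1}}$, and there is no reason for $g$ and $g^{-1}$ to induce the same correspondence. Likewise the appeal to ``rigid algebraic quotient structures'' versus ``flexibility of the parameter $a$'' is not a proof; nothing prevents a one-parameter family of modular correspondences in principle, so you would need a concrete obstruction. The paper's argument is different and sharp: by Remark~\ref{prop modular}(3), every modular correspondence admits an invariant Borel measure $\lambda$ (the pushforward of Haar) giving positive mass to every nonempty open set. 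But Remark~\ref{containment}(1) shows $\F(\widehat{\mathbb{C}}\setminus\Delta_{\J})\subset\widehat{\mathbb{C}}\setminus\Delta_{\J}$ with $(\widehat{\mathbb{C}}\setminus\overline{\Delta_{\J}})\setminus\F(\widehat{\mathbb{C}}\setminus\Delta_{\J})$ open and nonempty; any $\F$-invariant measure must assign it mass zero, which is the contradiction.

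For weak-modularity your route is unnecessarily heavy, and it is not clear that the ``mating measure'' you describe (Patterson--Sullivan glued to the measure of maximal entropy of $P$) is well defined on $\widehat{\mathbb{C}}$, let alone that it satisfies the balance condition $(\restr{\pi_1}{\Gamma})^*\mu_1=(\restr{\pi_2}{\Gamma})^*\mu_2$ of Definition~\ref{weaklymodular}. The paper's argument uses no dynamics at all: the graph $\Gamma_{\cov}$ is symmetric under $\iota(z,w)=(w,z)$, so starting from \emph{any} finite measure on $\Gamma_{\cov}$ one symmetrizes to obtain $\mu_1',\mu_2'$ witnessing weak-modularity of $\cov$; then, since $\F=\J\circ\cov$ with $\J$ a M\"obius involution, a short computation with $\widehat{\J}(z,w)=(z,\J(w))$ transports these to $\mu_1,\mu_2$ for $\F$. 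This buys a completely elementary proof independent of the mating structure.

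For part~2 your core idea matches the paper: the parabolic fixed point is $z=1$ (Remark~\ref{taylor}), and an attracting petal $\mathcal{P}\subset U$ at $1$ satisfies $\mathcal{P}\subset g^{-n}(\mathcal{P})\subset\F^{-n}(U)$ for all $n$, so every $K\in\mathfrak{K}(U,\F^{-1})$ contains $\mathcal{P}$ and the intersection strictly exceeds $\partial\Lambda_{a,-}$. One caution: the repeller definition actually used here is $\mathcal{R}=\bigcap_{K\in\mathfrak{K}(U,F^{-1})}K$, not the isolating-block condition $\overline{\F^{-1}(U)}\subset U$ you wrote; you should phrase the petal argument against that precise formulation.
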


Furthermore, we prove that $\F$ satisfies a property that is stronger than weak-modularity (see Remark \ref{norm}).

The purpose of this paper is to show that equidistribution holds for the family $\lbrace\F\rbrace_{a\in\mathcal{K}}$. Put
$$\E\coloneqq\left\lbrace\begin{matrix}
\emptyset & \mbox{ if }a\neq 5\\
\lbrace -1,2\rbrace & \mbox{ if }a=5.\end{matrix}\right.$$
We prove the following equidistribution theorem.

\begin{theorem}\label{B}
Let $a\in\mathcal{K}$. There exist two Borel probability measures $\m$ and $\mu_-$ on $\widehat{\mathbb{C}}$, with $\supp(\m)=\partial\Lambda_{a,+}$ and $\supp(\mu_-)=\partial\Lambda_{a,-}$, such that for every $z_0\in\widehat{\mathbb{C}}\setminus\E$,
$$\frac{1}{2^n}(\F^n)_*\delta_{z_0}\to\m\mbox{ and }\frac{1}{2^n}(\F^{-n})_*\delta_{z_0}\to\mu_-,$$
weakly, as $n\to\infty$.
\end{theorem}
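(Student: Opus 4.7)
The plan is to construct $\m$ by transporting, through the Bullett--Lomonaco hybrid conjugacy, the measure of maximal entropy of the underlying quadratic rational map, and to deduce equidistribution from three ingredients: the classical Brolin--Lyubich--Freire--Lopes--Ma\~n\'e equidistribution for quadratic rational maps, the forward invariance $\F(\Lambda_{a,+})=\Lambda_{a,+}$, and the mating decomposition of $\F$ with $\PSL_2(\mathbb{Z})$ on the complement of $\Lambda_{a,+}$.

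\textbf{Construction, invariance, and equidistribution inside $\Lambda_{a,+}$.} By \cite{BulLom20I}, the two-sided restriction $\f$ of $\F$ is hybrid equivalent, via a quasiconformal conjugacy $h$, to a quadratic rational map $P$, with the filled Julia set $K_P$ identified to $\Lambda_{a,+}$. Let $\mu_P$ be the measure of maximal entropy of $P$, supported on the Julia set $J_P$ and satisfying $P^{*}\mu_P=2\mu_P$. I would define $\m:=(h^{-1})_{*}\mu_P$, so that $\supp(\m)=\partial\Lambda_{a,+}$ and, via the conjugacy, $\tfrac{1}{2}\F_{*}\m=\m$. For $z_0\in\Lambda_{a,+}$, the forward iterates under $\F$ remain in $\Lambda_{a,+}$ by invariance, and under $h$ they match the iterated preimage trees of $h(z_0)$ under $P$. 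The classical equidistribution theorem for $P$ applied at the nonexceptional point $h(z_0)$, transported back by $h^{-1}$, then yields $\tfrac{1}{2^n}(\F^n)_{*}\delta_{z_0}\to\m$; the exceptional orbit of $P$ is empty for $a\neq 5$, and for $a=5$ corresponds via $h$ to $\E$.

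\textbf{Equidistribution outside $\Lambda_{a,+}$.} The main difficulty is the case $z_0\notin\Lambda_{a,+}$, where the forward branches of $\F^n(z_0)$ may leave $\Lambda_{a,+}$ and enter the region in which $\F$ encodes the action of $\PSL_2(\mathbb{Z})$. The idea is to exploit the mating structure to show that, at every $w\notin\Lambda_{a,+}$, one of the two branches of $\F(w)$ moves closer to $\Lambda_{a,+}$ (the rational-map side) while the other continues to behave like a modular symmetry. Iterating, one shows that only an asymptotically vanishing fraction of the $2^n$ branches of $\F^n(z_0)$ stay outside any fixed neighborhood of $\partial\Lambda_{a,+}$, and that the dominant contribution equidistributes to $\m$ by combining the interior analysis with a uniform estimate along the hybrid conjugacy. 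The exceptional set $\E$ at $a=5$ is then identified as a finite $\F$-invariant subset of the complement of $\Lambda_{a,+}$ whose grand orbit fails to meet $\partial\Lambda_{a,+}$.

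\textbf{Main obstacle and symmetry.} The hardest part is this quantitative control of escape from the complement of $\Lambda_{a,+}$, together with the precise identification of $\E$ at $a=5$; both require delicate use of Bullett--Lomonaco's description of $\F$ as a mating and of the branching structure of $\F$ on the modular-group side. The analogous statement for $\mu_-$ and $\F^{-1}$ follows by a symmetric argument, swapping the roles of $\Lambda_{a,+}$ and $\Lambda_{a,-}$.
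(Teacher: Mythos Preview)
Your high-level strategy---transport the Lyubich/Freire--Lopes--Ma\~n\'e measure through the Bullett--Lomonaco hybrid conjugacy, then handle the rest by the structure of $\F$---is exactly the paper's strategy. However, you have the roles of the limit sets reversed: in the paper the two-sided restriction $\f$ lives on a neighborhood of $\Lambda_{a,-}$ (not $\Lambda_{a,+}$), and the conjugacy $h$ identifies $\Lambda_{a,-}$ with $K_{P_A}$. The measure $\mu_-$ is defined first as $h^*\tilde\mu_A$, and then $\m\coloneqq(\J)_*\mu_-$. Your ``interior'' argument survives this correction (forward $\F$ on $\Lambda_{a,+}$ is conjugate via $h\circ\J$ to backward $P_A$ on $K_{P_A}$, which is exactly your ``preimage trees'' remark), but the mislabeling propagates into the harder part.

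The genuine gap is in your exterior case. You split according to $\Lambda_{a,+}$ and propose to show that an ``asymptotically vanishing fraction'' of the $2^n$ forward branches stay outside a neighborhood of $\partial\Lambda_{a,+}$, appealing vaguely to the mating with $\PSL_2(\mathbb Z)$. The paper instead splits according to $\Lambda_{a,-}$, and this is what makes the argument go through cleanly. The key structural fact you are missing is Lemma~\ref{lambdas}(3): if $z_0\notin\Lambda_{a,-}$, then there is a \emph{fixed finite} $n_0$ with $\F^{n_0}(z_0)\subset\widehat{\mathbb C}\setminus\Delta'_{\J}$ (all branches, not a vanishing fraction). After that, each $\zeta_j\in\F^{n_0}(z_0)$ has $\J(\zeta_j)\in\overline{\Delta'_{\J}}\setminus\mathcal E_{a,-}$, and $(\F^{n-n_0})_*\delta_{\zeta_j}=(\J)_*\bigl(\tfrac{1}{2^{n-n_0}}(\f^{n-n_0})^*\delta_{\J(\zeta_j)}\bigr)\to(\J)_*\mu_-=\m$ by the backward equidistribution already proved. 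The remaining case $z_0\in\Lambda_{a,-}\setminus\E$ is handled by an explicit recursive decomposition $\F(z_0)=\{\f(z_0),\tilde f_a(z_0)\}$ with $\f(z_0)\in\Lambda_{a,-}$ and $\tilde f_a(z_0)\in(\widehat{\mathbb C}\setminus\overline{\Delta'_{\J}})\cup\{1\}$, giving
\[
\frac{1}{2^n}(\F^n)_*\delta_{z_0}=\frac{1}{2^n}\delta_{\f^n(z_0)}+\sum_{j=1}^n\frac{1}{2^j}\cdot\frac{1}{2^{n-j}}(\F^{n-j})_*\delta_{\tilde f_a\circ\f^{j-1}(z_0)},
\]
and each summand falls under the previous case; a geometric-series truncation then gives convergence to $\m$. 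This is the precise mechanism behind your ``one branch moves closer / one branch stays'' intuition, but note that it takes place on the $\Lambda_{a,-}$ side, not on the modular-group side. Finally, your description of $\E$ is inaccurate: for $a=5$ one has $-1\in\mathcal E_{a,-}\subset\Lambda_{a,-}$ and $2=\J(-1)\in\Lambda_{a,+}$, so $\E$ is not contained in the complement of $\Lambda_{a,+}$.
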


In later work \cite{MdlP22II} we prove that the measures $\mu_+$ and $\mu_-$ maximize entropy: the metric entropy in \cite{VivSir22} yields equality for the Half-Variational Principle with the topological entropy in \cite{DinSib08II}.

Let $F$ be a holomorphic correspondence on $X$ with graph $\Gamma$. Denote by $\Gamma^{(n)}$ the graph of $F^n$ and by 
$$\mathfrak{D}_{X}\coloneqq\lbrace (z,z)|z\in X\rbrace$$
the diagonal in $X\times X$. Then the set of \emph{periodic points of $F$ of period $n$} is defined as the set
$$\Per_n(F)\coloneqq\pi_1\left(\Gamma^{(n)}\cap\mathfrak{D}_{X}\right),$$
and for $z\in\Per_n(F)$, we define the \emph{multiplicity of $z$ as a periodic point of $F$ of order $n$} to be the number $\nu_{\restr{\pi_1}{\Gamma^{(n)}\cap\mathfrak{D}_X}}(z,z)$, defined in Section \ref{hol}.

Another source of motivation is whether or not periodic points equidistribute. In \cite{Lju83}, Ljubich showed  that this is the case for rational maps of degree bigger than or equal to $2$, where periodic points are counted either with or without multiplicity. The equidistribution of periodic points is also studied in \cite{BriDuv99}, \cite{DinNguTru15} and \cite{FavRiv10} in the case of maps, and in \cite{Din05}, \cite{Din13} in the case of correspondences. We prove this holds for the family $\lbrace\F\rbrace_{a\in\mathcal{K}}$ as well.

\begin{theorem}\label{C} For $a\in\mathcal{K}$,
$$\frac{1}{|\Per_n(\F)|}\sum\limits_{z\in\Per_n(\F)}\delta_z\hspace{.5cm}\mbox{and}\hspace{.5cm}\frac{1}{2^{n+1}}\sum\limits_{z\in\Per_n(\F)}\nu_{\restr{\pi_1}{\Gamma_a^{(n)}\cap\mathfrak{D}_{\widehat{\mathbb{C}}}}}( z,z)\delta_z$$
are both weakly convergent to $\frac{1}{2}(\mu_-+\mu_+)$, as $n\to\infty$.
\end{theorem}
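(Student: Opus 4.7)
The plan is to relate the weighted sum $\sum_{z\in\Per_n(\F)}\nu(z,z)\,\delta_z$ (where $\nu(z,z)$ denotes the multiplicity from the excerpt) to intersection currents on $\widehat{\mathbb{C}}\times\widehat{\mathbb{C}}$, and then to extract its asymptotics from Theorem \ref{B}. Since the graph $\Gamma_a^{(n)}$ has bidegree $(2^n,2^n)$ and $\mathfrak{D}_{\widehat{\mathbb{C}}}$ has bidegree $(1,1)$ in $H^*(\widehat{\mathbb{C}}\times\widehat{\mathbb{C}})$, B\'ezout gives $[\Gamma_a^{(n)}]\cdot[\mathfrak{D}_{\widehat{\mathbb{C}}}]=2^{n+1}$, which matches the normalization in the theorem and equals the total mass of the weighted measure.

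For the equidistribution, I would use a potential-theoretic decomposition of the diagonal. Let $\omega$ denote the normalized Fubini-Study form on $\widehat{\mathbb{C}}$, and choose a quasi-plurisubharmonic potential $u$ on $\widehat{\mathbb{C}}\times\widehat{\mathbb{C}}$ satisfying $dd^c u=[\mathfrak{D}_{\widehat{\mathbb{C}}}]-\pi_1^*\omega-\pi_2^*\omega$, for example $u(z,w)=\log\frac{|z-w|^2}{(1+|z|^2)(1+|w|^2)}$. Pushing $[\Gamma_a^{(n)}]\wedge[\mathfrak{D}_{\widehat{\mathbb{C}}}]$ forward by $\pi_1$ and applying the projection formula produces the identity
\begin{equation*}
\sum_{z\in\Per_n(\F)}\nu(z,z)\,\delta_z=2^n\omega+(\F^n)^*\omega+dd^c v_n,
\end{equation*}
where $v_n(z)=\sum_{w\in\F^n(z)}u(z,w)$, with multiplicities.

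Dividing by $2^{n+1}$ and letting $n\to\infty$, the first term tends to $\tfrac{1}{2}\omega$; the second tends to $\tfrac{1}{2}\mu_-$, which is the current-level reformulation of the preimage equidistribution in Theorem \ref{B} (applied to the smooth test form $\omega$). For the third term, the forward half of Theorem \ref{B} applied to $u(z,\cdot)$ at each $z\notin\E$ formally yields $\tfrac{1}{2^n}v_n(z)\to g_+(z):=\int u(z,w)\,d\m(w)$, and Poincar\'e--Lelong together with $\m(\widehat{\mathbb{C}})=1$ gives $dd^c g_+=\m-\omega$. Combining the three contributions, $\tfrac{1}{2^{n+1}}\sum\nu(z,z)\delta_z$ converges weakly to $\tfrac{1}{2}\omega+\tfrac{1}{2}\mu_-+\tfrac{1}{2}(\m-\omega)=\tfrac{1}{2}(\m+\mu_-)$.

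The hard part will be making the convergence $\tfrac{1}{2^n}dd^c v_n\to\m-\omega$ rigorous, since $u$ has $-\infty$ singularities along $\mathfrak{D}_{\widehat{\mathbb{C}}}$ and $v_n$ inherits them at the periodic points. My plan is to approximate $u$ from above by a decreasing family of smooth functions $u^{(\varepsilon)}\downarrow u$, run the decomposition with $u^{(\varepsilon)}$ where Theorem \ref{B} applies without subtlety to give convergence of $\tfrac{1}{2^n}v_n^{(\varepsilon)}$, and pass $\varepsilon\to 0$ using compactness of quasi-plurisubharmonic families together with the fact that $\m$ and $\mu_-$ have no atoms, so the log-singularity of $u$ is integrable against them. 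Finally, to deduce the unweighted statement from the weighted one, I would argue that the set of periodic points with multiplicity $\geq 2$ has cardinality $o(2^n)$, using the expanding character of $\f$ on $\partial\Lambda_{a,+}$ away from the parabolic orbit together with the observation that any $\PSL_2(\mathbb{Z})$-induced fixed structure contributes only $O(1)$ excess multiplicity.
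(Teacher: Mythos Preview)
Your approach is genuinely different from the paper's. The paper does \emph{not} deduce Theorem~\ref{C} from Theorem~\ref{B} at all. Instead it uses the structural Lemma~\ref{periodic} to split $\Per_n(\F)$ into its parts in $\Lambda_{a,-}$ and $\Lambda_{a,+}$ (which meet only at $1$ and are exchanged by $\J$), and then invokes the quasiconformal conjugacy $h$ between $\f$ and the quadratic rational map $P_A$ to transport Ljubich's \emph{separate} theorem on equidistribution of periodic points of rational maps. Multiplicities are matched across $h$ using that topological conjugacies preserve attracting/repelling type and, for indifferent cycles, Naishul's theorem; the parabolic contributions at $1$ and $\infty$ are computed by hand and shown to give exactly $2^{n+1}$ total mass. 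This is elementary given the cited inputs and uses nothing potential-theoretic.

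Your Briend--Duval/Dinh--Sibony style argument is a legitimate general strategy, but the step you flag as hard is a genuine gap, not a routine regularization. Weak convergence in Theorem~\ref{B} gives, for each smooth $u^{(\varepsilon)}$ and each $z\notin\E$, only \emph{pointwise} convergence of $\tfrac{1}{2^n}v_n^{(\varepsilon)}(z)$; passing to $dd^c$ and then sending $\varepsilon\to 0$ requires interchanging the limits $n\to\infty$ and $\varepsilon\to 0$, for which one needs uniform-in-$n$ control of $\tfrac{1}{2^n}(v_n-v_n^{(\varepsilon)})$. In the references where this scheme succeeds, that control comes from an \emph{exponential rate} in the analogue of Theorem~\ref{B} (so that the log-singularity of $u$ is absorbed), which is not established here. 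The parabolic fixed point at $z=1$, where $\Gamma_a^{(n)}$ and $\mathfrak{D}_{\widehat{\mathbb{C}}}$ are tangent, is exactly where such uniform estimates are most delicate, and ``$\m,\mu_-$ have no atoms'' does not by itself close the gap. Your deduction of the unweighted statement is also incomplete: the appeal to ``expanding character of $\f$ on $\partial\Lambda_{a,+}$'' is misplaced (the restriction $\f$ lives on the $\Lambda_{a,-}$ side and is not expanding, having a parabolic fixed point), and the assertion that multiplicity $\geq 2$ contributes $o(2^n)$ needs an actual argument. The paper handles the unweighted case first and independently, again via Ljubich, so no such comparison is needed.
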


In Section \ref{equidistribution}, we define the set $\hat{P}^{\Gamma}_n$ of  \emph{superstable parameters of order $n$}. Combining the main results of both \cite{BulLom20II} and \cite{PetRoe21}, we obtain a homeomorphism $\Psi:\mathcal{M}\to\mathcal{M}_\Gamma$ between the Mandelbrot Set $\mathcal{M}$ and the connectedness locus $\mathcal{M}_{\Gamma}$ of the family $\lbrace\F\rbrace_{a\in\mathcal{K}}$. These results, together with the equidistribution result in \cite{Lev90} yield the following theorem.

\begin{theorem}\label{D}
In $\mathcal{M}_\Gamma$, superstable parameters equidistribute with respect to $\Psi^*m_{\bif}$, \emph{i.e.,}
$$\lim\limits_{n\to\infty}\frac{1}{2^{n-1}}\sum\limits_{a\in\hat{P}^{\Gamma}_n}\delta_a=\Psi^*m_{\bif}.$$
\end{theorem}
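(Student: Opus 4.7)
The plan is to transport Levin's equidistribution theorem for superstable parameters in $\mathcal{M}$ across the homeomorphism $\Psi$. Levin's theorem \cite{Lev90} states that if $P_n\subset\mathcal{M}$ denotes the set of $c$ with $(z^2+c)^{\circ n}(0)=0$, then $|P_n|=2^{n-1}$ and
\begin{equation*}
\frac{1}{2^{n-1}}\sum_{c\in P_n}\delta_c\longrightarrow m_{\bif}
\end{equation*}
weakly as $n\to\infty$, where $m_{\bif}$ is the bifurcation measure on the Mandelbrot set.

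The first step is to match superstable parameters on both sides of $\Psi$. By construction, $\Psi:\mathcal{M}\to\mathcal{M}_\Gamma$ is assembled from the straightening of the parabolic-like restriction $\f$ to a quadratic rational map with a parabolic fixed point of multiplier $1$, given in \cite{BulLom20II}, composed with the identification in \cite{PetRoe21} of the corresponding parabolic slice of quadratic rational maps with $\mathcal{M}$. Both steps are hybrid conjugacies on the relevant filled Julia sets, and therefore preserve the orbit of the free critical point. Hence, under $\Psi$, the condition that $0$ be $n$-periodic for $z^2+c$ corresponds exactly to the condition defining $\hat{P}^{\Gamma}_n$ in Section \ref{equidistribution}. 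This gives $\Psi(P_n)=\hat{P}^{\Gamma}_n$ and, in particular, $|\hat{P}^{\Gamma}_n|=2^{n-1}$.

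The second step is to apply the pushforward $\Psi_*$, which is the meaning of the star in $\Psi^* m_{\bif}$ in the statement. Since $\Psi$ is a homeomorphism between compact sets, $\Psi_*$ is continuous with respect to weak convergence of Borel probability measures, so
\begin{equation*}
\frac{1}{2^{n-1}}\sum_{a\in\hat{P}^{\Gamma}_n}\delta_a \;=\; \Psi_*\!\left(\frac{1}{2^{n-1}}\sum_{c\in P_n}\delta_c\right)\;\longrightarrow\;\Psi_* m_{\bif}=\Psi^* m_{\bif},
\end{equation*}
as claimed.

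The main obstacle I expect is the bookkeeping in the first step: tracking the critical-orbit data through two distinct straightening constructions (parabolic-like, then parabolic-slice-to-Mandelbrot) and certifying that period-$n$ centers are sent to period-$n$ centers. Hybrid equivalence gives topological conjugacy on the filled Julia sets, but one must check that the parabolic fixed point does not distort the period count and that no extra superstable parameters appear or disappear. Once this matching is secured, the theorem follows formally by pushing forward Levin's equidistribution statement.
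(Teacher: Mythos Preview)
Your proposal is correct and follows essentially the same route as the paper: both invoke the homeomorphism $\Psi$ built from \cite{BulLom20II} and \cite{PetRoe21}, observe that it matches superstable parameters of period $n$ on the two sides, and then push Levin's equidistribution result forward through $\Psi$. Your write-up is in fact more explicit than the paper's (which dispatches the argument in a few lines), particularly in justifying the weak continuity of $\Psi_*$ and in flagging the bookkeeping needed to verify $\Psi(P_n)=\hat{P}^{\Gamma}_n$.
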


\subsection{Notes and references}\label{notes}

There is a bigger family studied by Bullett and Harvey in \cite{BulHar00}, given by replacing the right hand side of equation (\ref{equ}) by $3k$, where $k\in\mathbb{C}$. For these correspondences it is also possible to define limit sets $\Lambda_{a,k,-}$ and $\Lambda_{a,k,+}$ analogous to the ones in the case where $k=1$. In \cite{bhasri16}, Bharali and Sridharan show how their equidistribution result applies to these correspondences in the case where $\Lambda_{a,k,-}$ is a repeller. Parameters for which this is the case  exist from the results in \cite{BulHar00}. However, we prove in Theorem \ref{A} part $2$ that this is never the case when $k=1$ and $a\in\mathcal{K}$.

\subsection{Organization}

The structure of this paper is as follows. In Section \ref{hol}, we give an introduction to holomorphic correspondences and their action on Borel measures. In Section \ref{thefamily}  we introduce the correspondences $\F$ given by equation (\ref{equ}). We define critical values and find those of $\F$ in Section \ref{critical}, and define Klein combination pair and the Klein combination locus $\mathcal{K}$ in Section \ref{kleincomb}. In Section \ref{modularity}, we define modular and weakly-modular correspondences, and prove part $1$ of Theorem \ref{A}. In order to prove that $\F$ is weakly-modular, we use the decomposition $\F=\J\circ\cov$ given in \cite{BulLom20I}, into a certain involution $\J$ composed with the deleted covering correspondence $\cov$ described in Section \ref{thefamily}, and construct the measures in the definition of weakly-modular using the symmetry of the graph of $\cov$. The fact that  $\F$ is not modular follows from the fact that Borel measures assigning positive measure to nonempty open sets are not invariant by $\F$. In Section \ref{limitsets}, we define the limit sets $\Lambda_{a,-}$ and $\Lambda_{a,+}$, and prove part $2$ of Theorem \ref{A} by showing that the parabolic fixed point in $\partial\Lambda_{a,-}$ violates the definition of a repeller. In Section \ref{exceptional and periodic}, we describe the exceptional set of the two-sided restriction $\f$ and the set $\Per_n(\F)$ of periodic points of period $n$. Finally, in Section \ref{equidistribution}, we use the description of $\f$ given in \cite{BulLom20I} together with the results in \cite{FreLopMan83} and \cite{Lju83} to prove Theorem \ref{B} and Theorem \ref{C} about asymptotic equidistibution of images, preimages and periodic points. We finish with the proof of Theorem \ref{D} about equidistribution of special points in the Modular Mandelbrot set $\mathcal{M}_\Gamma$.

\textbf{Ackwoledgements:}
The author would like to thank Shaun Bullett and Luna Lomonaco for a useful conversation in Santiago de Chile, and specially thank Juan Rivera-Letelier for his great patience and guidance.

\section{Preliminaries}
\subsection{Holomorphic Correspondences}\label{hol}
Let $X$ be a compact Riemann surface and let $\pi_j:X\times X\to X$ be the canonical projection to the $j$-th coordinate, $j=1,2$. We say that a formal sum $\Gamma=\sum_i n_i\Gamma(i)$ is a \emph{holomorphic 1-chain} on $X\times X$ if its support $\supp\Gamma\coloneqq\bigcup_i \Gamma(i)$ is a subvariety of $X\times X$ of pure dimension $1$ whose irreducible components are exactly the $\Gamma(i)$'s, and the $n_i$'s are non-negative integers. We say that the $\Gamma(i)$'s are the \emph{irreducible components of $\Gamma$}.

Let $\Gamma=\sum_in_i\Gamma(i)$ be a holomorphic 1-chain satisfying that for $j=1,2$ and every $i$ such that $n_i>0$, the restriction $\restr{\pi_j}{\Gamma(i)}$ of the canonical projection $X\times X\to X$ to the irreducible component $\Gamma(i)$ is surjective. The chain $\Gamma$ induces a multivalued map $F$ from $X$ to itself by 
$$F(z)\coloneqq\bigcup\limits_i\restr{\pi_2}{\Gamma(i)}\left(\restr{\pi_1}{\Gamma(i)}^{-1}(z)\right).$$
The multivalued map $F$ is called a \emph{holomorphic correspondence} and it is said to be  \emph{irreducible} if $\sum_i n_i=1$. We say that $\Gamma_F\coloneqq\Gamma$ is the \emph{graph} of the holomorphic correspondence $F$. Let $\iota:X\times X\to X\times X$ be the involution $(z,w)\mapsto(w,z)$. We can define the \emph{adjoint correspondence} $F^{-1}$ of $F$ by the relation $F^{-1}(z)\coloneqq\bigcup_i\restr{\pi_1}{\Gamma(i)}(\restr{\pi_2}{\Gamma(i)}^{-1}(z))$, which is a holomorphic correspondence, whose graph is the holomorphic 1-chain $\Gamma_F^{-1}=\sum_in_i\iota(\Gamma(i))$.

In \cite{Sto66}, Stoll introduced a notion of multiplicity that will be useful for this paper. Let $M$ be a quasi-projective variety and $N$ a smooth quasi-projective variety. If $g:M\to N$ is regular, and $a\in M$, then 
we say that a neighborhood $U$ of $a$ is \emph{distinguished with respect to $g$ and $a$} if $\overline{U}$ is compact and $g^{-1}(g(a))\cap\overline{U}=\lbrace a\rbrace$. Such neighborhoods exist if and only if $\dim_a g^{-1}(g(a))=0$, and in this case they form a base of neighborhoods. If $U$ is distinguished with respect to $g$ and $a$, then put $\mu_g(z,U)\coloneqq|g^{-1}(g(z))\cap U|$. It can be shown that $\hat{\nu}_g(a)\coloneqq\limsup\limits_{z\to a}\mu_g(z,U)$ does not depend on the distinguished neighborhood $U$, and the maps $n_b$ defined on $N$ by $a\mapsto\sum\limits_{b\in g^{-1}(a)}\hat{\nu}_g(b)$ is constant in each component.

Suppose that $g:M\to N$ is a finite and surjective regular map, with $M$ and $N$ as above. Stoll proved in \cite{Sto66} that $\hat{\nu}_g(a)$ generalizes the notion of multiplicity of $g$ at $a$ and whenever $\varphi$ is a continuous function with compact support in $M$, the map
$$a\mapsto\sum\limits_{b\in g^{-1}(a)}\hat{\nu}_g(b)\varphi(b)$$
is continuous.

In order to study dynamics, we proceed to define the \emph{composition} of two holomorphic correspondences $F$ and $G$ with associated holomorphic 1-chains $\Gamma_F=\sum_i n_i\Gamma_F(i)$ and $\Gamma_G=\sum_j m_j\Gamma_G(j)$, respectively. For each $i$ and $j$, let $A_{i,j}$ be the image of the projection $p_{i,j}:(\Gamma_G(j)\times\Gamma_F(i))\cap\lbrace x_2=x_3\rbrace\hookrightarrow X\times X$ that forgets the second and third coordinates, \emph{i.e.},
$$A_{i,j}=\lbrace (z,w)\in X\times X|\exists x\in X\mbox{ such that }(z,x)\in\Gamma_G(j)\mbox{, and }(x,w)\in\Gamma_F(i)\rbrace.$$
Let $\lbrace\Gamma(i,j,k)\rbrace_{k=1}^{N(i,j)}$ be the irreducible components of $A_{i,j}$. Observe that since $\Gamma_G(j)$ and $\Gamma_F(i)$ are both quasi-projective, and so is $\lbrace x_2=x_4\rbrace\subset X^4$, then $p_{i,j}$ is a regular map from the quasi-projective variety $(\Gamma_G(j)\times\Gamma_F(i))\cap\lbrace x_2=x_3\rbrace$ to the smooth quasi-projective variety $X\times X$. Then we have that the map $a\mapsto n_{\restr{p_{i,j}}{\Gamma(i,j,k)}}(a)$ defined on $\Gamma(i,j,k)$ is constant. Therefore, $\eta_{i,j,k}\coloneqq n_{\restr{p_{i,j}}{\Gamma(i,j,k)}}(a)$ denotes the number of $x\in X$ such that $((z,x),(x,w))\in\Gamma_G(j)\times\Gamma_F(i)$, for a generic point $(z,w)\in\Gamma(i,j,k)$. Define the composition $F\circ G$ as the holomorphic correspondence determined by the holomorphic 1-chain
$$\Gamma_{F\circ G}\coloneqq\sum\limits_{i,j}\sum\limits_{k=1}^{N(i,j)}n_im_j\eta_{i,j,k}\Gamma(i,j,k).$$
Note that the $\supp\Gamma_{F\circ G}=\bigcup_{i,j}A_{i,j}$.

Set $d(F)\coloneqq\sum_i n_i\deg(\restr{\pi_2}{\Gamma(i)})$. We have that $d(F\circ G)=d(F)d(G)$. Thus, in particular, for every integer $n\geq 1$ we have that $d(F^n)=(d(F))^n$. We call $d(F)$ the \emph{topological degree} of $F$, and it corresponds to the number of preimages of a generic point under $F$.

If $F$ is an irreducible holomorphic correspondence over $X$ with graph $\Gamma$ and $\varphi:X\to\mathbb{C}$ is a continuous function, then
$$F^*\varphi(z)\coloneqq\sum\limits_{(z,w)\in\restr{\pi_1}{\Gamma}^{-1}(z)}\hat{\nu}_{\restr{\pi_1}{\Gamma}}(z,w)\varphi(w)$$
is continuous as well, see \cite[Lemma 1.1]{CloUll03}. Now let $F$ be a holomorphic correspondence that is not necessarily irreducible, with graph $\Gamma_F=\sum_i n_i\Gamma(i)$. We denote by $F_i$ the holomorphic correspondence induced by $\Gamma(i)$ and we put
$$\nu_{F_i}(z,w)=\left\lbrace
\begin{matrix}
\hat{\nu}_{\restr{\pi_1}{\Gamma(i)}} &\mbox{if }(z,w)\in\Gamma(i)\\ 
0 &\mbox{otherwise} 
\end{matrix}\right.$$
and $\nu_{F}\coloneqq\sum_i n_i\nu_{F_i}$. Then for every continuous function $\varphi:X\to\mathbb{C}$, the map
\begin{eqnarray*}
z\mapsto\sum\limits_{w\in F(z)}\nu_{F}(z,w)\varphi(w)&=&\sum\limits_{w\in f(z)}\left(\sum\limits_i n_i\nu_{F_i}(z,w)\right)\varphi(w)\\
&=&\sum\limits_i n_i {F_i}_*\varphi(w)
\end{eqnarray*}
is also continuous.

The holomorphic correspondence $F$ induces an action $F_*$ on finite Borel measures $\mu$ by duality, namely $\langle F_*\mu,\varphi\rangle\coloneqq\langle\mu,{F}^*\varphi\rangle$, called the \emph{push-forward operator} and the resultant measure $F_*\mu$ is the \emph{push-forward measure of $\mu$ under $F$}. We define as well the action ${F}^*\coloneqq(F^{-1})_*$, called the \emph{pull-back operator} and the resultant measure $F^*\mu$ is called the \emph{pull-back measure of $\mu$ under $F$}. This action on measures agrees with the action on points
$$F^*\delta_z\coloneqq\sum\limits_{w\in F(z)}\nu_{F}(z,w)\delta_w,$$
where $\delta_z$ is the Dirac delta at $z$.

In order to see this, note that for every continuous function $\varphi:X\to\mathbb{C}$
\begin{eqnarray*} 
\left\langle\sum\limits_{w\in F(z)}\nu_{F}(z,w)\delta_w,\varphi\right\rangle&=&\sum\limits_{w\in F(z)}\nu_{F}(z,w)\langle\delta_w,\varphi\rangle\\
&=&\sum\limits_{w\in F(z)}\nu_{F}(z,w)\varphi(w)\\
&=&\int\left(\sum\limits_{w\in F(\zeta)}\nu_{F}(\zeta,w)\varphi(w)\right)d\delta_z(\zeta)\\
&=&\langle\delta_z,F_*\varphi\rangle\\
&=&\langle F^*\delta_z,\varphi\rangle.
\end{eqnarray*}

\subsection{The Family $\lbrace\F\rbrace_a$}\label{thefamily}
Let $Q(z)\in\mathbb{C}[z]$ be a nonlinear polynomial. The \emph{deleted covering relation of $Q$} on $\mathbb{C}\times\mathbb{C}$ is defined by $w\in\cov(z)$ if and only if 
\begin{equation}\label{covering}
P_Q(z,w)\coloneqq\frac{Q(z)-Q(w)}{z-w}=0.
\end{equation}
Note that the denominator ``deletes" the obvious association of $z$ with itself in the equation $Q(z)=Q(w)$.

In this section, we will identify $\widehat{\mathbb{C}}$ with the complex projective line when it is convenient to work with homogeneous coordinates $(z:w)$.

\begin{proposition}\label{holcorr} Put $Q(z)\coloneqq z^3-3z$. The closure of the relation (\ref{covering}) is an irreducible quasiprojective complex variety $\Gamma_0$ of $\widehat{\mathbb{C}}\times\widehat{\mathbb{C}}$ of dimension $1$. Moreover, the projections $\restr{\pi_1}{\Gamma_0}$ and $\restr{\pi_2}{\Gamma_0}$ are both surjective and of degree 2.
\end{proposition}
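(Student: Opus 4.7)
The plan is to first compute $P_Q(z,w)$ explicitly and then verify each asserted property in turn, viewing $\Gamma_0$ as the Zariski closure in $\mathbb{P}^1\times\mathbb{P}^1$ of the affine zero locus of $P_Q$.

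First I would use $Q(z)=z^3-3z$ together with the identity $z^3-w^3=(z-w)(z^2+zw+w^2)$ to obtain
$$P_Q(z,w)=z^2+zw+w^2-3.$$
The bihomogenization $z_0^2w_1^2+z_0z_1w_0w_1+z_1^2w_0^2-3z_1^2w_1^2$ is a nonzero bihomogeneous polynomial of bidegree $(2,2)$ in the coordinates on $\mathbb{P}^1\times\mathbb{P}^1$, so its vanishing locus $\Gamma_0\subset\widehat{\mathbb{C}}\times\widehat{\mathbb{C}}$ is a projective---hence quasi-projective---variety of pure dimension $1$.

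Next, for irreducibility, I would view $P_Q$ as the monic quadratic $w^2+zw+(z^2-3)$ in the variable $w$ over the UFD $\mathbb{C}[z]$. Its discriminant is $z^2-4(z^2-3)=-3(z-2)(z+2)$, which is not a square in $\mathbb{C}[z]$, so $P_Q$ does not factor into two linear factors in $w$ over $\mathbb{C}(z)$; by Gauss's lemma $P_Q$ is irreducible in $\mathbb{C}[z,w]$. Since the Zariski closure of an irreducible affine variety is irreducible, $\Gamma_0$ is irreducible.

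Finally, for the projections: for each fixed $z\in\mathbb{C}$ the equation $w^2+zw+(z^2-3)=0$ has exactly two roots counted with multiplicity, so $\restr{\pi_1}{\Gamma_0}$ covers $\mathbb{C}$ and has generic fibre of size $2$, giving $\deg(\restr{\pi_1}{\Gamma_0})=2$; inspecting the bihomogeneous equation at $z_1=0$ forces $w_1=0$, so the unique point of $\Gamma_0$ above $z=\infty$ is $(\infty,\infty)$, completing surjectivity. The symmetry $P_Q(z,w)=P_Q(w,z)$ yields the analogous conclusion for $\restr{\pi_2}{\Gamma_0}$. The most delicate step is the irreducibility check, but since $P_Q$ has small degree it reduces to the single discriminant calculation above.
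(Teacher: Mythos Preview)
Your proof is correct and follows essentially the same approach as the paper: both compute $P_Q(z,w)=z^2+zw+w^2-3$, establish irreducibility via the discriminant $-3z^2+12$ not being a square in $\mathbb{C}[z]$, bihomogenize to pass to $\widehat{\mathbb{C}}\times\widehat{\mathbb{C}}$, and check that the only point at infinity is $(\infty,\infty)$. Your treatment is somewhat more streamlined---you read off the dimension directly from the fact that a nonzero bihomogeneous polynomial cuts out a hypersurface in the surface $\mathbb{P}^1\times\mathbb{P}^1$, and you handle the closure algebraically by setting $z_1=0$, whereas the paper argues the dimension by lifting to $\mathbb{C}^4$ and quotienting, and describes the closure analytically via limits---but these are differences of presentation rather than of strategy.
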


\begin{proof}
Note that $P_Q(z,w)=z^2+zw+w^2-3$ and consider $P_Q(z,w)$ as a single variable polynomial in $(\mathbb{C}[z])[w]$. Then its discriminant $-3z^2+12$ is not a square in $\mathbb{C}[z]$. Therefore $P_Q(z,w)$ is an irreducible polynomial, and hence
$$\mathcal{Z}\coloneqq\lbrace (z,w)\in\mathbb{C}\times\mathbb{C}\hspace{.1cm}|P_Q(z,w)=0\rbrace$$
is an irreducible subvariety of $\mathbb{C}\times\mathbb{C}$.

Now we want to describe the closure $\overline{\mathcal{Z}}$ of $\mathcal{Z}$ in $\widehat{\mathbb{C}}\times\widehat{\mathbb{C}}$. Observe that if we fix $z\in\mathbb{C}$, then $\lim\limits_{w\to\infty}P_Q(z,w)=\infty$, and if we fix $w\in\mathbb{C}$, then $\lim\limits_{z\to\infty}P_Q(z,w)=\infty$. Therefore there are no points of the form $(z,\infty)$ or $(\infty, w)$ in $\overline{\mathcal{Z}}$. Given $R>0$, let $z\in\mathbb{C}$ be such that $|z|=R$. Observe that $P_Q(z,\cdot)\in\mathbb{C}[w]$ is nonconstant and therefore has at least one root in $\mathbb{C}$. Let $w\in\mathbb{C}$ be a root. Then $(z,w)\in\mathcal{Z}$ and we have that $|w^3-3w|=|Q(w)|=|Q(z)|=|z^3-3z|\geq R^3-3R$. By taking $R\to\infty$, we get that $(z,w)\to(\infty,\infty)$. Therefore $\overline{\mathcal{Z}}=\mathcal{Z}\cup\lbrace (\infty ,\infty)\rbrace$. In particular, $\overline{\mathcal{Z}}$ extends the relation given by (\ref{covering}) from $\mathbb{C}\times\mathbb{C}$ to $\widehat{\mathbb{C}}\times\widehat{\mathbb{C}}$.

Take the homogenization
$$T(z,x,w,y)\coloneqq z^2y^2+zxwy+x^2w^2-3x^2y^2$$
of $P_Q(z,w)$, so $P_Q(z,w)=T(z,1,w,1)$, and note that $T(\lambda_1z,\lambda_1x,\lambda_2w,\lambda_2y)=\lambda_1^2\lambda_2^2T(z,x,w,y)$. Thus, for the closed subvariety
$$\Gamma_0\coloneqq\lbrace((z:x),(w:y))\in\widehat{\mathbb{C}}\times\widehat{\mathbb{C}}\hspace{.1cm}|T(z,x,w,y)=0\rbrace$$
of $\widehat{\mathbb{C}}\times\widehat{\mathbb{C}}$, we have that $\Gamma_0=\overline{\mathcal{Z}}$. In order to prove that $\Gamma_0$ is irreducible, note that each of its irreducible components intersecting $\mathbb{C}\times\mathbb{C}$ must be a closed subset of $\Gamma_0$ containing $\Gamma_0\cap(\mathbb{C}\times\mathbb{C})=\mathcal{Z}$, and therefore it is $\Gamma_0$ itself.
Thus $\Gamma_0$ has only one irreducible component and hence it is irreducible.

We proceed to show $\Gamma_0$ has dimension 1. Observe that the polynomial $T(z,x,w,y)$ is irreducible in $\mathbb{C}[z,x,w,y]$, as whenever $S(z,x,w,y)|T(z,x,w,y)$ in $\mathbb{C}[z,x,w,y]$, then $S(z,1,w,1)|P_Q(z,w)$ in $\mathbb{C}[z,w]$. Therefore the zero set $Z(T)\subset\mathbb{C}^2\times\mathbb{C}^2$ of $T$ is an irreducible hypersurface of $\mathbb{C}^2\times\mathbb{C}^2$, and hence it has codimension 1. Now let $p:\mathbb{C}^2\setminus\lbrace(0,0)\rbrace\to\widehat{\mathbb{C}}$ be the projection sending $(z,x)\mapsto(z:x)$. Note that in the chart
$$U_1=\lbrace (z,w)\in\mathbb{C}^2\setminus\lbrace (0,0)\rbrace|w\neq 0\rbrace,$$
the map $p$ is simply $(z,w)\mapsto(z/w:1)$, and in the chart
$$U_2=\lbrace (z,w)\in\mathbb{C}^2\setminus\lbrace (0,0)\rbrace|z\neq 0\rbrace,$$
it becomes $(z,w)\mapsto(1:w/z)$. Let $\hat{p}:(\mathbb{C}^2\setminus\lbrace(0,0)\rbrace)\times(\mathbb{C}^2\setminus\lbrace(0,0)\rbrace)\to\widehat{\mathbb{C}}\times\widehat{\mathbb{C}}$ be the map defined by $\hat{p}((z,x),(w,y))\coloneqq(p(z,x),p(w,y))$. Then $\restr{\hat{p}}{Z(T)}:Z(T)\to\Gamma_0$ is a regular map between irreducible varieties, and $\restr{\hat{p}}{Z(T)}$ has constant fiber dimension equal to 2, as $T$ is homogeneous in $(z,x)$ and in $(w,y)$. Therefore, 
$$\dim\Gamma_0=\dim Z(T)-\dim\restr{\hat{p}}{Z(T)}^{-1}(z,w)=1.$$

Finally, observe that the polynomial equation (\ref{covering}) has at least 1 and at most 2 solutions for every $z\in\mathbb{C}$, and by symmetry the same holds for $w\in\mathbb{C}$. Note as well that $\infty$ is in correspondence with and only with itself. Thus, the projections $\restr{\pi_1}{\Gamma_0}$ and $\restr{\pi_2}{\Gamma_0}$ are both surjective. Moreover, $P_Q(1,1)=P_Q(1,-2)=0$, and hence $P_Q(1,w)$ has exactly 2 solutions. Therefore $\deg(\restr{\pi_1}{\Gamma_0})=\deg(\restr{\pi_2}{\Gamma_0})= 2$.
\end{proof}

\begin{remark} Proposition \ref{holcorr} holds for a large class of polynomials $Q(z)$. Observe that no polynomial can have $Q(z)=Q(\infty)$ for a finite number $z$. Therefore, following the proof of Proposition \ref{holcorr}, we conclude that in order to get an irreducible holomorphic correspondence, it suffices to prove that $P_Q(z,w)$ is irreducible over $\mathbb{C}$. This holds under fairly general conditions. For instance, this is the case when $Q$ is indecomposable and not linearly related to either $z^n$ or a Chebyshev polynomial (see \cite{Fri70}).
\\
On the other hand, note that whenever $Q=R\circ S$ with $R$ and $S$ of degree greater than $1$, then $P_S(z,w)$ divides $P_Q(z,w)$, and therefore $P_Q(z,w)$ is reducible.
\vspace{.5cm}
\end{remark}

Proposition \ref{holcorr} says that $\Gamma_0$ is the graph of an irreducible holomorphic
correspondence, where $\Gamma_0$ is a quasi-projective variety and
$\restr{\pi_1}{\Gamma_0}:\Gamma_0\to\widehat{\mathbb{C}}$ is a
finite and surjective morphism over $\mathbb{C}$, and hence we can use
our definition of pull-back and push-forward operators induced by the correspondence
on finite measures. We call this correspondence the \emph{deleted covering correspondence of $Q$}, denoted by $\cov$ as well. That is, $\cov$ is the holomorphic correspondence on $\widehat{\mathbb{C}}\times\widehat{\mathbb{C}}$ such that $\Gamma_{\cov}=\Gamma_0$. From now on, we always consider $Q(z)=z^3-3z$.

Now take $a\in\mathbb{C}\setminus\lbrace 1\rbrace$ and let $\J:\widehat{\mathbb{C}}\to\widehat{\mathbb{C}}$ be the involution 
$$\J(z)\coloneqq\frac{(a+1)z-2a}{2z-(a+1)}.$$
The composition of $\cov$ with the involution $\J$ is again an irreducible holomorphic correspondence $\F\coloneqq\J\circ\cov$ on $\widehat{\mathbb{C}}\times\widehat{\mathbb{C}}$ with graph $\Gamma_a$ for which we can use the pull-back and push-forward operators above as well. Note that $d(\F)=d(\F^{-1})=2$ and $\F^{-1}=\cov\circ\J=\J\circ\F\circ\J$, since $\cov^{-1}=\cov$ and $\J^{-1}=\J$.

Set $\phi_a(z)\coloneqq\frac{az+1}{z+1}$. Then $\phi_a^{-1}\circ\F\circ\phi_a$ is a holomorphic correspondence on $\widehat{\mathbb{C}}\times\widehat{\mathbb{C}}$ that restricted to $(\widehat{\mathbb{C}}\setminus\lbrace -1\rbrace)\times(\widehat{\mathbb{C}}\setminus\lbrace1\rbrace)$ induces the relation given by the equation (\ref{equ}). Thus  $(z,w)\in(\widehat{\mathbb{C}}\setminus\lbrace -1\rbrace)\times(\widehat{\mathbb{C}}\setminus\lbrace1\rbrace)$ satisfies equation (\ref{equ}) if and only if $w\in\F(z)$ \cite[Lemma 3.1]{BulLom20I}.

Observe that $\F^{-1}(1)=\cov(\J(1))=\cov(1)=\lbrace 1,-2\rbrace$, independent of $a\in\mathbb{C}\setminus\lbrace 1\rbrace$. In particular, $1\in\Per_1(\F)$ and we say that $1$ is a \emph{fixed point} of $\F$.

\subsection{Critical Values}\label{critical}
In this section we will discuss what parts of the graph of $\F$ are locally the graph of a holomorphic function, by defining and finding all critical values and ramification points of $\Gamma_a$. This will be used in Section 4 to find the exceptional set.

\begin{definition}\label{crit} Let $\Gamma$ be the graph of an irreducible holomorphic correspondence on $\widehat{\mathbb{C}}$, and put
$$A_j(\Gamma)\coloneqq\left\lbrace\alpha\in\Gamma|\mbox{ for all open neighborhoods  }W\mbox{ of }\alpha,\restr{\pi_j}{W\cap\Gamma}\mbox{ is not injective}\right\rbrace,$$
for $j=1,2$ and $B_j(\Gamma)\coloneqq\pi_j\left(A_j(\Gamma)\right).$

We extend the definition to holomorphic 1-chains $\Gamma=\sum_i n_i\Gamma(i)$ by
$$A_j(\Gamma)\coloneqq\bigcup_iA_j(\Gamma(i)),\mbox{ and }B_j(\Gamma)\coloneqq\pi_j(A_j(\Gamma)).$$
We call $A_2(\Gamma)$ the set of \emph{ramification points} of the holomorphic correspondence associated to $\Gamma$, and $B_2(\Gamma)$ the set of its \emph{critical values}.
\end{definition}

Note that $A_1(\Gamma)=\iota(A_2(\Gamma^{-1}))$ and $A_2(\Gamma)=\iota(A_1(\Gamma^{-1}))$, where $\iota:\widehat{\mathbb{C}}\times\widehat{\mathbb{C}}\to\widehat{\mathbb{C}}\times\widehat{\mathbb{C}}$ is the involution $\iota(z,w)=(w,z)$.

Suppose $\Gamma$ is the graph of a holomorphic correspondence on $\widehat{\mathbb{C}}$. Let $g:\Omega\to\widehat{\mathbb{C}}$ be a holomorphic function defined on a domain $\Omega$ of $\widehat{\mathbb{C}}$, whose graph $\graph(g)$ is contained in one of the irreducible components $\Gamma(i)$ of $\Gamma$. If $a\in\Omega$ is a critical point for $g$, then $(a,g(a))\in A_2(\Gamma(i))\subset A_2(\Gamma)$ and therefore $g(a)\in B_2(\Gamma)$, \emph{i.e.}, the critical value $\restr{\pi_2}{\Gamma(i)}(\alpha)=g(a)$ of the function $g$ is a critical value for the holomorphic correspondence associated to $\Gamma$, as well.

On the other hand, if $\alpha\notin A_1(\Gamma)$, then there exists a holomorphic function $g:\Omega\to\widehat{\mathbb{C}}$ defined on a neighborhood $\Omega$ of $a=\pi(\alpha)$, such that $(a,g(a))=\alpha$, and $\graph(g)\subset\Gamma(i)$ for some $i$. If in addition $\alpha\in A_2(\Gamma)$, then $g$ is not locally injective at $a$, and therefore $g'(a)=0$. Therefore $a$ is a critical point of $g$ and $\pi_2(\alpha)=g(a)$ is a critical value of $g$.

If we denote by $\critpt(g)$ the set of critical points of $g$, and by
$$\critval(g)\coloneqq\lbrace g(a)|a\in\critpt(g)\rbrace$$
the set of critical values of $g$, then we get a motivation for the name ``critical values" in Definition \ref{crit} by the containment
$$B_2(\Gamma)\setminus B_1(\Gamma)\subset\bigcup\limits_{i}\bigcup\limits_{\graph(g)\subset\Gamma(i)}\critval(g),$$
where the first union runs over the irreducible components of $\Gamma$ and the second union runs over all the holomorphic functions $g:\Omega\to\widehat{\mathbb{C}}$ whose graph $\graph(g)$ is contained in $\Gamma(i)$.

\begin{proposition}\label{b2} For every $a\in\mathbb{C}\setminus\lbrace 1\rbrace$, we have that
\begin{equation}\label{a1}
A_1(\Gamma_a)=\left\lbrace \left(\infty,\frac{a+1}{2}\right),\left( -2,1\right),\left(2,\frac{3a+1}{3+a}\right)\right\rbrace,
\end{equation}
and 
\begin{equation}\label{a2}
A_2(\Gamma_a)=\left\lbrace \left(\infty,\frac{a+1}{2}\right),\left(1,\frac{4a+2}{a+5}\right), \left(-1,\frac{2}{3-a}\right)\right\rbrace.
\end{equation}
As a consequence, $B_1(\Gamma_a)=\lbrace\infty,-2,2\rbrace$ and $B_2(\Gamma_a)=\left\lbrace \frac{a+1}{2},\frac{4a+2}{a+5},\frac{2}{3-a}\right\rbrace$. Moreover, $\Gamma_a$ is smooth at all points except $\left(\infty,\frac{a+1}{2}\right)$.
\end{proposition}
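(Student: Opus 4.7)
The plan is to reduce the entire calculation to the covering correspondence $\cov$ and then transport the answer through $\J$. Since $\J$ is a biholomorphic involution of $\widehat{\mathbb{C}}$, the map $\mathrm{id}\times\J$ is a biholomorphism of $\widehat{\mathbb{C}}\times\widehat{\mathbb{C}}$, and the factorization $\F=\J\circ\cov$ gives $\Gamma_a=(\mathrm{id}\times\J)(\Gamma_0)$. The conditions defining $A_1(\Gamma)$, $A_2(\Gamma)$, and smoothness of $\Gamma$ are invariant under this biholomorphism, so $A_j(\Gamma_a)=(\mathrm{id}\times\J)(A_j(\Gamma_0))$ for $j=1,2$, and $(z,w)\in\Gamma_a$ is smooth if and only if $(z,\J(w))\in\Gamma_0$ is. It therefore suffices to compute $A_1(\Gamma_0)$, $A_2(\Gamma_0)$, and the singular locus of $\Gamma_0$.

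In the affine chart $\mathbb{C}\times\mathbb{C}$, $\Gamma_0$ is cut out by $P_Q(z,u)=z^2+zu+u^2-3$, whose gradient $(2z+u,\,z+2u)$ vanishes only at $(0,0)$, where $P_Q(0,0)=-3\neq 0$; so the affine part of $\Gamma_0$ is smooth. By the implicit function theorem, $A_1(\Gamma_0)$ in this chart is the zero locus of $\partial_u P_Q=z+2u$ on $\Gamma_0$; solving $z=-2u$ together with $P_Q=0$ gives $3u^2=3$, hence the two points $(-2,1)$ and $(2,-1)$. The symmetric argument using $\partial_z P_Q=2z+u$ yields the $A_2$-points $(1,-2)$ and $(-1,2)$.

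The main obstacle is the analysis at the unique infinite point $(\infty,\infty)$ of $\Gamma_0$ (cf.\ Proposition \ref{holcorr}). In local coordinates $(s,t)=(1/z,1/u)$, the defining equation becomes $\tilde P(s,t)=t^2+st+s^2-3s^2t^2$, whose lowest-order homogeneous part factors as $(t-\omega s)(t-\bar\omega s)$ with $\omega=e^{2\pi i/3}$. Since these two linear factors are distinct, the Newton--Puiseux theorem (applied after a linear change of coordinates followed by Weierstrass preparation) produces two distinct smooth analytic branches of $\Gamma_0$ through $(\infty,\infty)$ with tangent directions $t=\omega s$ and $t=\bar\omega s$. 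Hence $(\infty,\infty)$ is a nodal singularity of $\Gamma_0$ and is its only singular point. Because the two branches have distinct tangents, each projection $\pi_j$ is locally two-to-one on $\Gamma_0$ near $(\infty,\infty)$, so $(\infty,\infty)\in A_1(\Gamma_0)\cap A_2(\Gamma_0)$.

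Pushing forward by $\mathrm{id}\times\J$ and using $\J(1)=1$, $\J(-1)=(3a+1)/(a+3)$, $\J(-2)=(4a+2)/(a+5)$, $\J(2)=2/(3-a)$, and $\J(\infty)=(a+1)/2$ then yields (\ref{a1}) and (\ref{a2}); the descriptions of $B_1(\Gamma_a)$ and $B_2(\Gamma_a)$ follow by taking $\pi_j$ of each.
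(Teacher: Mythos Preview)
Your proof is correct and follows essentially the same approach as the paper: reduce to $\Gamma_0=\Gamma_{\cov}$, compute $A_1$, $A_2$, and the singular locus there via the implicit function theorem and a local analysis at $(\infty,\infty)$, then transport through the biholomorphism $\mathrm{id}\times\J$. The only cosmetic difference is that the paper exhibits the two branches at infinity by an explicit parametrization, whereas you identify the node from the factorization of the initial form $s^2+st+t^2$; one small clarification worth adding is that the projections are two-to-one near $(\infty,\infty)$ not merely because the two tangents are distinct, but because neither tangent direction $t=\omega s$, $t=\bar\omega s$ is vertical or horizontal.
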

To prove this proposition, we first prove the following lemma.
\begin{lemma}\label{cov lemma} For each $a\in\mathbb{C}\setminus\lbrace 1\rbrace$,
$$A_1(\Gamma_{\cov})=\lbrace(\infty,\infty),(-2,1),(2,-1)\rbrace$$
and
$$A_2(\Gamma_{\cov})=\lbrace(\infty,\infty),(1,-2),(-1,2)\rbrace.$$
Thus, $B_1(\Gamma_{\cov})=\lbrace\infty, -2,2\rbrace$ and $B_2(\Gamma_{\cov})=\lbrace\infty, 1,-1\rbrace$.
\end{lemma}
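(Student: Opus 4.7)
The plan is to analyze $\Gamma_{\cov}$ separately on the affine part $\mathbb{C}\times\mathbb{C}$, where $\Gamma_{\cov}$ is cut out by $P_Q(z,w)=z^2+zw+w^2-3=0$, and at the one point $(\infty,\infty)$ that compactifies it in $\widehat{\mathbb{C}}\times\widehat{\mathbb{C}}$. In the affine part, smoothness follows from the observation that $\nabla P_Q=(2z+w,\,z+2w)$ vanishes only at $(0,0)$, which fails to lie on the curve. Since the affine part is therefore a smooth complex $1$-submanifold of $\mathbb{C}^2$, the projection $\pi_j$ fails to be locally injective at a point precisely when the tangent line is parallel to the corresponding fiber of $\pi_j$; equivalently, when the appropriate partial derivative of $P_Q$ vanishes.

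Carrying this out, the affine contribution to $A_1$ comes from the system $\partial_w P_Q=z+2w=0$ together with $P_Q=0$. Substituting $z=-2w$ yields $3w^{2}=3$, whose solutions give the points $(-2,1)$ and $(2,-1)$. By the $(z,w)$-symmetry of $P_Q$, the analogous analysis with $\partial_z P_Q=2z+w=0$ produces the affine contribution $(1,-2),\,(-1,2)$ to $A_2$.

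For the point at infinity I would pass to the affine chart $u=1/z,\,v=1/w$, in which the defining equation becomes $u^2+uv+v^2-3u^2v^2=0$ and $(\infty,\infty)$ corresponds to the origin. The tangent cone at $(0,0)$ is the quadratic form $u^2+uv+v^2$, whose discriminant $-3$ is nonzero, so it factors over $\mathbb{C}$ as a product of two distinct lines through the origin. Hence $\Gamma_{\cov}$ has an ordinary node at $(\infty,\infty)$, and this accounts for the one singular point of the curve. Two smooth analytic branches meet there transversally, and although each branch individually is locally injective under both $\pi_1$ and $\pi_2$ (since neither tangent line is a coordinate axis), every punctured neighborhood of $(\infty,\infty)$ contains pairs of points on the two distinct branches sharing the same $z$-coordinate and also pairs sharing the same $w$-coordinate. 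Therefore $(\infty,\infty)\in A_1\cap A_2$, completing the description of $A_1$ and $A_2$, from which $B_1$ and $B_2$ follow by projecting.

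The main obstacle I anticipate is the analysis at $(\infty,\infty)$: I must both identify it as a singular node rather than an ordinary ramification point and verify that the failure of injectivity of each $\pi_j$ arises from the two-branch structure of the node rather than from a vertical or horizontal tangent on a single branch. Once the tangent cone calculation in the chart $(u,v)$ is in hand, this is straightforward, but it is the step most prone to error because it is tempting to apply the smooth-point criterion (vanishing of a partial derivative) at a point where smoothness itself already fails.
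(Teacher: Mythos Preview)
Your argument is correct and matches the paper's on the affine part: both reduce membership in $A_j$ at smooth points to the vanishing of the appropriate partial derivative of $P_Q$. At $(\infty,\infty)$ the paper instead writes the two local branches out explicitly via the quadratic formula (the expressions $\tfrac{1}{2}\bigl(\pm\sqrt{3}\sqrt{\cdots}+\cdots\bigr)$) and observes directly that they give two preimages under each projection, whereas you pass to the chart $(u,v)=(1/z,1/w)$ and read the ordinary node off the tangent cone $u^2+uv+v^2$; both routes establish the same fact, namely that two branches with distinct non-axial tangents meet at $(\infty,\infty)$, which forces non-injectivity of both $\pi_1$ and $\pi_2$ there.
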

\begin{proof}
Differentiating the equation $P_Q(z,w)=0$ with respect to $w$, we get that $\partial_w P_Q(z,w)=z+2w$ vanishes if and only if $w=\frac{-z}{2}$. Also, $P_Q(z,-\frac{z}{2})=0$ if and only if $z=\pm 2$. Therefore, $\frac{dw}{dz}$ exists on $\mathbb{C}\setminus\lbrace -2, 2\rbrace$. Thus, by the Implicit Function Theorem, for every $(z,w)\in\Gamma_{\cov}$ such that $z\in\mathbb{C}\setminus\lbrace-2,2\rbrace$, there exists a domain $\Omega$ containing $z$ and a holomorphic function $g:\Omega\to\widehat{\mathbb{C}}$ such that $g(z)=w$ and  $\graph(g)=\Gamma_{\cov}\cap U$, for some open neighborhood $U$ of $(z,w)$. In addition, the function $g$ will be locally injective at $z$ if $\partial_z P_Q(z,w)=2z+w$ is nonzero. Therefore, if $(z,w)\in\Gamma_{\cov}$ satisfies that both $z$ and $w$ are different from $\pm 2$, then $(z,w)\notin A_2(\Gamma_{\cov})$.

On the other hand, observe that the only points $(z,w)\in\Gamma_{\cov}$ with $z=\pm 2$ are $(-2,1)$ and $(2,-1)$. In particular, $w\neq\pm 2$, and by the symmetry of the above argument, we can use the Implicit Function Theorem to obtain a neighborhood $U$ of $w$ and a function $g:U\to\widehat{\mathbb{C}}$ satisfying $\graph(g)\subset\Gamma_{\cov}$ and $g(w)=z$, and such that $\restr{\pi_2}{\Gamma_{\cov}}$ is injective in the neighborhood $(g(U)\times U)\cap\Gamma_{\cov}$ of $(z,w)$. This proves that neither $(-2,1)$ or $(2,-1)$ belong to $A_2(\Gamma_{\cov})$. Since the only points $(z,w)\in\Gamma_{\cov}$ with $z=\pm 1$ are $(1,-2)$ and $(-1,2)$, and since $\Gamma_{\cov}\setminus(\mathbb{C}\times\mathbb{C})=\lbrace (\infty,\infty)\rbrace$, we have that $A_2(\Gamma_{\cov})$ is contained in $\lbrace (\infty,\infty),(1,-2),(-1,2)\rbrace$.

We will check that for every neighborhood $W$ of $(\infty,\infty)$, $(1,-2)$ and $(-1,2)$, we have that $\restr{\pi_2}{W\cap\Gamma_{\cov}}$ is not injective. Let $W_1,W_2$ and $W_3$ be open neighborhoods of $(\infty,\infty)$, $(1,-2)$ and $(-1,2)$, respectively. Then there exists $T>0$ such that  for every $0<t<T$,
$$\left(\frac{1}{2}\left(\pm\sqrt{3}\sqrt{-\frac{1}{t^2}-\frac{4}{t}}+\frac{1}{t}+2\right),-2-\frac{1}{t}\right)\in\Gamma_{\cov}\cap W_1,$$
$$\left(\frac{1}{2}\left(\pm\sqrt{3}\sqrt{-t^2-4t}+t+2\right),-2-t\right)\in\Gamma_{\cov}\cap W_2,$$
and
$$\left(\frac{1}{2}\left(\pm\sqrt{3}\sqrt{4t-t^2}+t-2\right),2-t\right)\in\Gamma_{\cov}\cap W_3.$$We conclude that $A_2(\Gamma_{\cov})=\lbrace (\infty,\infty),(1,-2),(-1,2)\rbrace$, and by the symmetry of $\cov$, $A_1(\Gamma_{\cov})=\lbrace (\infty,\infty),(-2,1),(2,-1)\rbrace$. We conclude that
$$B_2(\Gamma_{\cov})=\lbrace \infty, -2,2\rbrace=B_1(\Gamma_{\cov}).$$
\end{proof}
\begin{proof}[Proof of Proposition \ref{b2}]
Since $\F=\J\circ\cov$ and $\J$ is an involution, we have that 
$$A_j(\Gamma_a)=\lbrace (z,\J(w)):(z,w)\in A_j(\Gamma_{\cov})\rbrace,$$
for $j=1,2$. Using Lemma \ref{cov lemma}, this gives us (\ref{a1}) and (\ref{a2}), and thus $B_1(\Gamma_a)=\lbrace \infty, -2,2\rbrace$ and
$$B_2(\Gamma_a)=\lbrace \J(\infty),\J(-2),\J(2)\rbrace=\left\lbrace \frac{a+1}{2},\frac{4a+2}{a+5},\frac{2}{3-a}\right\rbrace.$$

In addition, observe that locally, $\Gamma_a$ is either a function on $z$ or on $w$ for all $(z,w)\in\Gamma_a$ with $z\neq\infty$, then the only point that can be irregular is $\left(\infty,\frac{a+1}{2}\right)$. Indeed, this point is irregular, as the curve given by the points 
$$\left(\frac{1}{2}\left(\pm\sqrt{3}\sqrt{-\frac{1}{t^2}-\frac{4}{t}}+\frac{1}{t}+2\right),-2-\frac{1}{t}\right)\in\Gamma_{\cov}$$
self-intersects at $(\infty,\infty)$ with an angle of $\frac{2\pi}{3}$. In other words, there are two functions $z(w)$ which intersect with different derivatives, which makes $(\infty,\infty)$ an irregular point of $\Gamma_{\cov}$. Thus, passing through the involution $\J$, we get that $\left(\infty,\frac{a+1}{2}\right)$ is an irregular point of  $\Gamma_a$.
\end{proof}

\begin{remark}\label{open} The correspondence $\cov$, and hence $\F$, sends open sets to open sets. Indeed, let $U\subset\widehat{\mathbb{C}}$ be open and take $w_0\in\cov(U)$. Then there exists $z_0\in U$ for which $(z_0,w_0)\in\Gamma_{\cov}$. We will prove that $\cov(U)$ is open by showing that in all the cases, $w_0\in\interior(\cov(U))$.
\begin{itemize}
\item Suppose $(z_0,w_0)\notin A_1(\Gamma_{\cov})$. By Lemma \ref{cov lemma} and since {$(\cov)^{-1}(\infty)=\lbrace\infty\rbrace$}, we have that $w_0\neq\infty$. Moreover, there exists a holomorphic function $g:\Omega\to\mathbb{C}$ on an open subset $\Omega\subset U$, and $(z_0,w_0)\in\graph(g)\subset\Gamma_{\cov}$. Furthermore, $\Gamma_{\cov}$ is irreducible and $\cov$ is not constant, so $g$ is not constant. Thus, $g$ is open and then $w_0\in g(\Omega)\subset\interior(\cov(U))$.
\item Now suppose $(z_0,w_0)\notin A_2(\Gamma_{\cov})$. By Lemma \ref{cov lemma} and since $\cov(\infty)=\lbrace\infty\rbrace$, then $z_0\neq\infty$ and there exists a holomorphic function $\tilde{g}:\widetilde{\Omega}\to\mathbb{C}$ on an open subset $\widetilde{\Omega}\subset\mathbb{C}$ so that $(z_0,w_0)\in\iota(\graph(\tilde{g}))\subset\Gamma_{\cov}$. Since $\tilde{g}$ is continuous, then $\tilde{g}^{-1}(U)$ is open and $w_0\in\tilde{g}^{-1}(U)\subset\interior(\cov(U))$.
\item Finally, if $(z_0,w_0)\in A_1(\Gamma_{\cov})\cap A_2(\Gamma_{\cov})$, then Lemma \ref{cov lemma} implies that $(z_0,w_0)=(\infty,\infty)$. For each $r>0$, put $U_r\coloneqq\lbrace |z|>r\rbrace\cup\lbrace\infty\rbrace$. To show that $\infty\in int(\cov(U))$ we will show that for every $R>\sqrt{3}$, $U_{2R}\subset\cov(U_R)$. We proceed by contrapositive. If $P_Q(z,w)=0$ and $|w|\leq R$, then
$$|z|^2=|3-zw-w^2|\leq 3+|z||w|+|w|^2\leq 2R^2+|z|R.$$
Hence $|z|^2-R|z|-2R^2\leq 0$ and $|z|\leq 2R$. This implies that for every $R>\sqrt{3}$, we have that $\cov(U_{2R})\subset U_R$. By the symmetry of $\cov$,
$$U_{2R}\subset\cov(\cov(U_{2R}))\subset\cov(U_R).$$
Moreover, if $R>\sqrt{3}$ is large enough so that $U_R\subset U$, then 
$$\infty\in U_{2R}\subset \interior(\cov(U)).$$
\end{itemize}
Therefore, $\cov$ sends open sets to open sets. Since the involution $\J$ also sends open sets to open sets, then so does $\F$.
\end{remark}

\subsection{Klein Combination pairs}\label{kleincomb}
In this section, we will define the set $\mathcal{K}$ of parameters we will consider for our family. 

\begin{definition} A \emph{Fundamental Domain} for an irreducible holomorphic correspondence $F$ is an open set $\Delta_F$ that is maximal with the property that $\Delta_F\cap F(\Delta_F)=\emptyset$.
\end{definition}

\begin{definition} We say that a pair $(\Delta_{\cov},\Delta_{\J})$ of fundamental domains for $\cov$ and $\J$, respectively, is a \emph{Klein combination pair} for $\F$ if both $\Delta_{\cov}$ and $\Delta_{\J}$ are simply connected domains, bounded by Jordan curves, and satisfy
$$\Delta_{\cov}\cup\Delta_{\J}=\widehat{\mathbb{C}}\setminus\lbrace 1\rbrace.$$
We define as well the \emph{Klein combination locus} $\mathcal{K}$ to be the set of parameters $a\in\mathbb{C}\setminus\lbrace 1\rbrace$ for which there exist a Klein combination pair.
\end{definition}

In \cite{BulLom20I}, for $|a-4|\leq 3$, $a\neq 1$, the authors found a Klein combination pair for $\F$, where $\Delta_{\cov}$ is given by the right side of the curve 
$$L\coloneqq\left\lbrace\left(1+\frac{t}{2}\right)\pm i\sqrt{3\left(t+\frac{t}{2}^2\right)}\hspace{.1cm}\Big|t\in[0,\infty]\right\rbrace=\cov((-\infty ,-2]),$$
and $\Delta_{\J}$ is given by the exterior of the circle passing through $z=1$ and $z=a$ with diameter contained in the real line.

\begin{figure}[h]
\centering
\includegraphics[scale=.8]{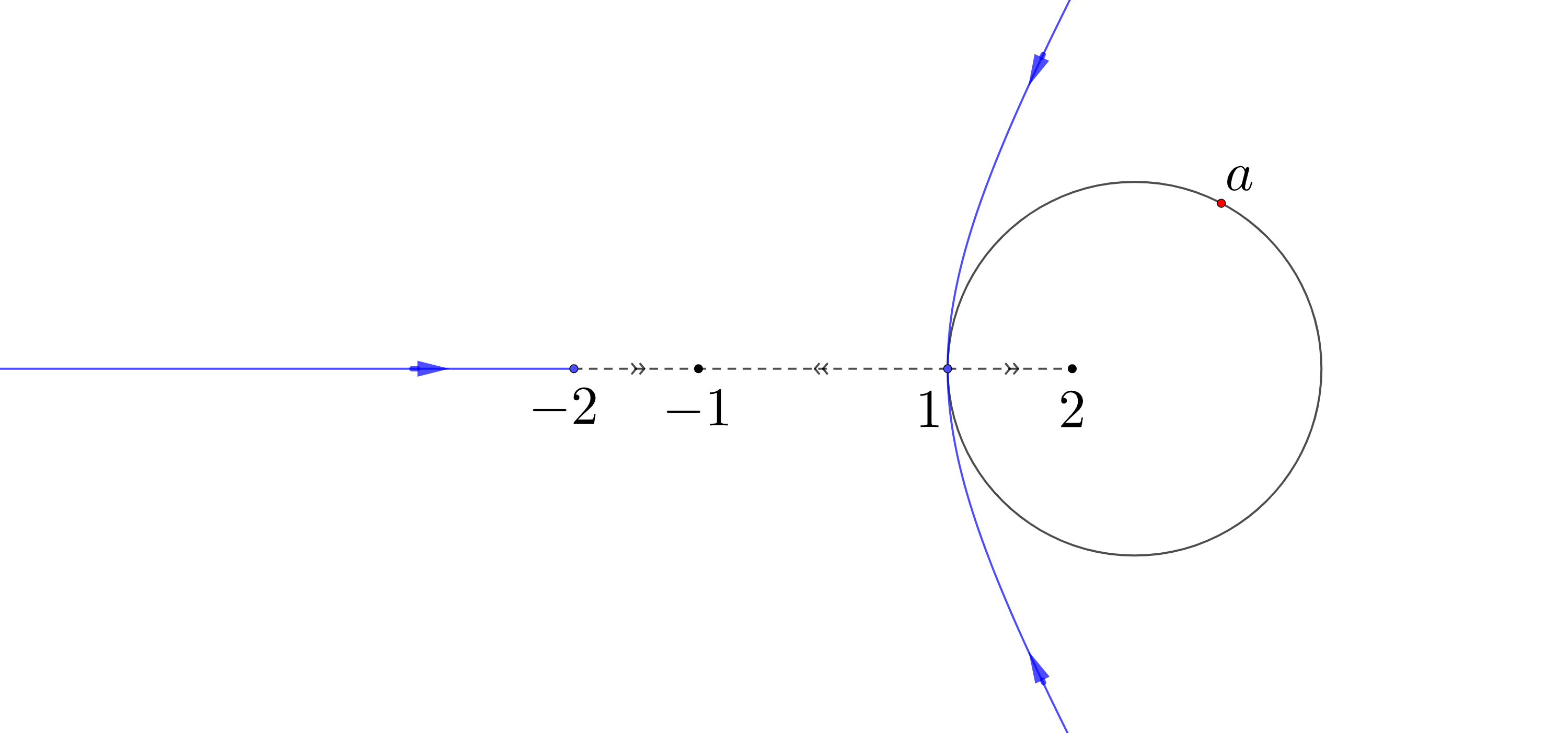}
\caption{Klein Combination pair for $|a-4|\leq 3$.}\label{klein}
\end{figure}

This pair $(\Delta_{\cov},\Delta_{\J})$ is composed by simply connected fundamental domains for $\cov$ and $\J$, respectively, whose boundaries are Jordan curves, smooth except from $\partial\Delta_{\cov}$ at $(\infty,\infty)$ (see Figure \ref{klein} and \cite[Proposition 3.3]{BulLom20I}). In particular,
$$\lbrace a\in\mathbb{C}|\mbox{ } |a-4|\leq 3\rbrace\setminus\lbrace 1\rbrace\subset\mathcal{K}.$$

From now on, whenever $a\in\mathcal{K}$, we denote by $(\Delta_{\cov},\Delta_{\J})$ a Klein combination pair for $\F$.

The following remark will be useful to prove that $\F$ is not modular, and later to analyze the asymptotic behavior of $\F$.

\begin{remark}\label{containment} Let $a\in\mathcal{K}$.
\begin{itemize}
\item[1.] Note that $\F(\widehat{\mathbb{C}}\setminus\Delta_{\J})\subset\widehat{\mathbb{C}}\setminus\Delta_{\J}$ and ${\F((\widehat{\mathbb{C}}\setminus\Delta_{\J})\setminus\lbrace 1\rbrace)\subset\widehat{\mathbb{C}}\setminus\overline{\Delta_{\J}}}$. Indeed, 
$$\F(1)=\J(\cov (1))=\J(\lbrace 1,-2\rbrace)=\left\lbrace 1,\frac{4a+2}{a+5}\right\rbrace.$$
From Remark \ref{open}, we have that $\cov$ sends open sets to open sets. In particular, $-2$ cannot belong to $\Delta_{\cov}$, as $1\notin\cov(\Delta_{\cov})=\widehat{\mathbb{C}}\setminus\overline{\Delta_{\cov}}$. By the Klein combination pair condition, this implies that $-2\in\Delta_{\J}$, and thus $\J(-2)=\frac{4a+2}{a+5}\notin\Delta_{\J}$. Since we also have that $1\notin\Delta_{\J}$, we have that
$$\F(1)\subset\widehat{\mathbb{C}}\setminus\Delta_{\J}.$$
On the other hand, for $z\in\widehat{\mathbb{C}}\setminus\Delta_{\J}$, $z\neq 1$, we have that $z\in\Delta_{\cov}$. Therefore, 
$$\cov(z)\subset\widehat{\mathbb{C}}\setminus\overline{\Delta_{\cov}}\subset\Delta_{\J},$$
and thus, since $\J$ is an involution fixing $\partial\Delta_{\J}$,
$$\F(z)=\J\circ\cov(z)\subset\J(\Delta_{\J})=\widehat{\mathbb{C}}\setminus\overline{\Delta_{\J}}.$$
In particular, $\Delta_{\J}\setminus\F(\widehat{\mathbb{C}}\setminus\Delta_{\J})\neq\emptyset$.
\item[2.] Observe that $\F^{-1}(\Delta_{\J})\subset\Delta_{\J}$ and $\F^{-1}(\overline{\Delta_{\J}})\subset\Delta_{\J}\cup\lbrace 1\rbrace$. Indeed, for every $z\in\Delta_{\J}$, we have that $\J(z)\in\widehat{\mathbb{C}}\setminus\overline{\Delta_{\J}}\subset\Delta_{\cov}$. From Remark \ref{open} and since $(\Delta'_{\cov},\Delta_{\J})$ is a Klein combination pair, then 
$$\F^{-1}(z)=\cov(\widehat{\mathbb{C}}\setminus\overline{\Delta_{\J}})\subset\cov(\Delta_{\cov})\widehat{\mathbb{C}}\setminus\overline{\Delta_{\cov}}\subset\Delta_{\J}.$$
Furthermore, for $w\in\overline{\Delta_{\J}}$, we have that
$$\F^{-1}(w)\subset\cov(\J(\overline{\Delta_{\J}}))\subset\cov(\widehat{\mathbb{C}}\setminus\Delta_{\J})\subset\cov(\overline{\Delta_{\cov}})\subset\widehat{\mathbb{C}}\setminus\Delta_{\cov}\subset\overline{\Delta_{\J}}.$$
\end{itemize}
\end{remark}

\section{Clasification of the Family $\lbrace\F\rbrace_a$}\label{weaklymodular}
In this section we prove that $\F$ is weakly-modular but not modular, for every $a\in\mathcal{K}$. In addition, we prove that it does not satisfy the required conditions for the equidistribution result \cite[Theorem 3.5]{bhasri16}. All together prove Theorem \ref{A}.

\subsection{Modularity and weak modularity}\label{modularity}

\begin{definition}\label{modular} Let $G$ be a connected Lie Group, $\Lambda$ a torsion free lattice and $K$ a compact Lie subgroup. Let $g\in G$ be such that $g\Lambda g^{-1}\cap\Lambda$ has finite index in $\Lambda$. The \emph{irreducible modular correspondence} induced by $g$ is the multivalued map $F_g$ on $X=\Lambda\setminus G/K$ corresponding to the projection to $X$ of the map $x\mapsto (x,gx)$ on $G\to G\times G$. Denote by $\Gamma_g$ the graph of $F_g$.
A \emph{modular correspondence} $F$ is a correspondence whose graph is of the form $\sum_jn_j\Gamma_{g_j}$, for $\Gamma_{g_j}$ as before.
\end{definition}
The following definition was introduced by Dinh, Kaufmann and Wu in \cite{DinKauWu20}.

\begin{definition}\label{weaklymodular} Let $X$ be a compact Riemann surface and let $F$ be a holomorphic correspondence on $X$ with graph $\Gamma$ such that $d(F)=d(F^{-1})$. We say that $F$ is a \emph{weakly-modular correspondence} if there exist Borel probability measures $\mu_1$ and $\mu_2$ on $X$, such that
$$(\restr{\pi_1}{\Gamma})^*\mu_1=(\restr{\pi_2}{\Gamma})^*\mu_2.$$
\end{definition}

\begin{remark}\label{prop modular} \hfill
\begin{itemize}
\item[1.] Let $F$ be a modular correspondence that is also a holomorphic correspondence. Then it is always the case that $$d(F)=\sum_j n_j[\Lambda: g_j\Lambda g_j^{-1}\cap\Lambda]=\sum_j n_j[\Lambda:g_j^{-1}\Lambda g_j\cap\Lambda]=d(F^{-1}).$$
\item[2.] With the notation in Definition \ref{modular}, let $\lambda$ be the direct image on $X$ of the finite Haar measure on $\Lambda\setminus G$. Then $\frac{1}{d(F_g)}F_g^*\lambda=\lambda$ and if we put $\mu_1=\mu_2=\lambda$, we get that
$$(\restr{\pi_1}{\Gamma_g})^*\mu_1=(\restr{\pi_2}{\Gamma_g})^*\mu_2.$$
Therefore, modular correspondences are weakly modular.
\item[3.] The measure $\lambda$ above is Borel, invariant under $F_g$ and assigns positive measure to nonempty open sets.
\end{itemize}
\end{remark}
\vspace{.1cm}

\begin{proof}[Proof of Theorem \ref{A} part 1.]
Observe that the graph $\Gamma_{\cov}$ of the correspondence $\cov$ is symmetric with respect to the diagonal $\mathfrak{D}_{\widehat{\mathbb{C}}}=\lbrace (z,z) | z\in\widehat{\mathbb{C}}\rbrace$, as $z\in\cov(w)$ if and only if $w\in\cov(z)$. Let $m$ be any positive and finite measure on $\Gamma_{\cov}$, and $\iota:\Gamma_{\cov}\to\Gamma_{\cov}$ the involution $\iota(z,w)\coloneqq(w,z)$. Take 
$$m_0\coloneqq\left(\restr{\pi_1}{\Gamma_{\cov}}\right)^*\left(\restr{\pi_1}{\Gamma_{\cov}}\right)_*m+\iota^*\left(\restr{\pi_1}{\Gamma_{\cov}}\right)^*\left(\restr{\pi_1}{\Gamma_{\cov}}\right)_*m.$$
The measure $m_0$ is symmetric in $\Gamma_{\cov}$ in the sense that $\iota^*m'=m'$ and moreover
\begin{equation}\label{zeroth}\left(\restr{\pi_1}{\Gamma_{\cov}}\right)^*\left(\restr{\pi_1}{\Gamma_{\cov}}\right)_*m_0=\left(\restr{\pi_2}{\Gamma_{\cov}}\right)^*\left(\restr{\pi_2}{\Gamma_{\cov}}\right)_*m_0.\end{equation}
After normalizing if necessary, this proves that $\cov$ is weakly modular with measures $\mu'_1\coloneqq\left(\restr{\pi_1}{\Gamma_{\cov}}\right)_*m_0$ and $\mu'_2\coloneqq\left(\restr{\pi_2}{\Gamma_{\cov}}\right)_*m_0$. Our goal is to show there are probability measures $\mu_1$ and $\mu_2$ on $\widehat{\mathbb{C}}$ such that
\begin{equation}\label{wm}
\left(\restr{\pi_1}{\Gamma_a}\right)^*\mu_1=\left(\restr{\pi_2}{\Gamma_a}\right)^*\mu_2,
\end{equation}
where $\Gamma_a$ is the graph of the correspondence $\F=\F\circ\cov$.
\\
Put $\mu_2\coloneqq(\J)^*\mu'_2=\left(\J\right)_*\mu'_2$ and observe that by symmetry of $\J$ we have that
\begin{equation}\label{first}
\left(\restr{\pi_2}{\Gamma_a}\right)^*\mu_2=\left(\restr{\pi_2}{\Gamma_a}\right)^*\left(\J\right)^*\mu'_2=\left(\J\circ\restr{\pi_2}{\Gamma_a}\right)^*\mu'_2
\end{equation} 
Now let $\widehat{\J}:\widehat{\mathbb{C}}\times\widehat{\mathbb{C}}\to\widehat{\mathbb{C}}\times\widehat{\mathbb{C}}$ be given by $\widehat{\J}(z,w)\coloneqq (z,\J(w))$. Observe that whenever $(z,w)\in\Gamma_{\cov}$, then $(z,\J(w))\in\Gamma_a$ and 
\begin{equation}\label{second}
\left(\J\circ\restr{\pi_2}{\Gamma_{\cov}}\right)(z,w)=\J(w)=\left(\restr{\pi_2}{\Gamma_a}\circ\widehat{\J}\right)(z,w).
\end{equation}
On the other hand, $\restr{\pi_1}{\Gamma_{\cov}}\circ\widehat{\J}(z,w)=\restr{\pi_1}{\Gamma_a}(z,w)$ with multiplicity $2$. This, together with equations (\ref{zeroth}), (\ref{first}) and (\ref{second}) yield
\begin{equation}\label{w-m}
\left(\restr{\pi_2}{\Gamma_a}\right)^*\mu_2=\left(\widehat{\J}\right)^*\left(\restr{\pi_2}{\Gamma_{\cov}}\right)^*\mu'_2=\left(\widehat{\J}\right)^*\left(\restr{\pi_1}{\Gamma_{\cov}}\right)^*\mu'_1=2\left(\restr{\pi_1}{\Gamma_{\cov}}\right)^*\mu'_1.
\end{equation}
Observe that $\mu_1\coloneqq 2\mu'_1$ and $\mu_2$ both have $2$ times the mass of $\mu_1$ and $\mu_2$. After normalizing, equation (\ref{w-m}) proves that $\F$ is weakly-modular, as desired.

In order to check $\F$ is not modular, we will prove that no Borel measure $\lambda$ on $\widehat{\mathbb{C}}$ that gives positive measure to nonempty open sets can be invariant under $\F$. Suppose by contradiction that $\lambda$ is such a measure satisfying $\frac{1}{2}\F^*\lambda=\lambda$. In particular, we have that $\lambda\left((\widehat{\mathbb{C}}\setminus\Delta_{\J})\setminus\F(\widehat{\mathbb{C}}\setminus\Delta_{J})\right)=0$. On the other hand, note that
$$\F(\widehat{\mathbb{C}}\setminus\Delta_{\J})=\restr{\pi_2}{\Gamma_a}\left(\Gamma_a\cap ((\widehat{\mathbb{C}}\setminus\Delta_{\J})\times\widehat{\mathbb{C}})\right)$$
is closed in $\widehat{\mathbb{C}}$. By Remark \ref{containment} part 1, $(\widehat{\mathbb{C}}\setminus\overline{\Delta_{\J}})\setminus\F(\widehat{\mathbb{C}}\setminus\Delta_{\J})$ is open and nonempty, contained in $(\widehat{\mathbb{C}}\setminus\Delta_{\J})\setminus\F(\widehat{\mathbb{C}}\setminus\Delta_{J})$. This contradicts part $3$ of Remark \ref{prop modular}, as $\lambda$ cannot assign 0 measure to open sets.
\end{proof}

\begin{remark}\label{norm} For a $(d,d)$ holomorphic correspondence $F$ on compact Riemann surface $X$, the operator $\frac{1}{d}F^*$ acts on the space $L^2_{(1,0)}$ of $(1,0)$-forms with $L^2$ coefficients. In \cite{DinKauWu20} the authors showed that the operator norm satisfies $\Vert\frac{1}{d}F^*\Vert\leq 1$, with strict inequality for non weakly-modular correspondences. This strict inequality is a key factor of their equidistribution result. However, this is never the case for $F=\F$, $a\neq 1$. We claim that
$$\Big\Vert\frac{1}{2}\F^*\Big\Vert=\sup\left\lbrace\Big\Vert\frac{1}{2}\F^*\phi\Big\Vert_{L^2}\Big|\phi\in L^2_{(1,0)},\Vert\phi\Vert_{L^2}=1\right\rbrace=1,$$
and furthermore that the supremum is attained. In order to prove this, we use that $\|\frac{1}{d}F^*\phi\|_{L^2}=\|\phi\|_{L^2}$ for $\phi\in L^2_{(1,0)}$ if and only if for every $U\subset X\setminus B_1(\Gamma)$ and for every pair of local branches $f_1$ and $f_2$ of $F$ on $U$, the equality $f_1^*\phi=f_2^*\phi$ holds on $U$ (see \cite[Proposition 2.1]{DinKauWu20}).
\\
Observe that the form $\phi(z)=e^{-|Q(z)|}dz$ belongs to $L^2_{(1,0)}$ for $Q$ as in Section \ref{thefamily}. Let $U\subset \widehat{\mathbb{C}}\setminus B_1(\Gamma_{\cov})$. Then the deleted covering correspondence $\cov$ sends $z$ to the values $w$ for which $\frac{Q(z)-Q(w)}{z-w}=0$. Hence, any two local branches $f_1$ and $f_2$ of $\cov$ satisfy $f_1^*\phi(z)=Q(z)=f_2^*\phi(z)$, and thus $\Vert\frac{1}{2}\cov^*\Vert=1$. Now note that $\J$ is an involution, and hence $\J^*$ has operator norm $1$. Thus we can conclude that $\Vert\frac{1}{2}\F^*\Vert=1$ as well, with supremum attained at $\phi$.
\end{remark}

\subsection{Limit Sets}\label{limitsets}

In this section, we define limit sets and give some properties. We will also prove that the application listed in \cite[Section 7]{bhasri16} does not hold for our family of correspondences, and hence this is a new case to study equidistribution.

\begin{remark}\label{taylor} From Proposition \ref{b2}, observe that $1\notin B_1(\Gamma_a)$, and hence there is a holomorphic function $g$ whose graph contains $(1,1)$ and is contained in $\Gamma_a$. After the change of coordinates $\psi(z)=z-1$, the function $g$ has Taylor series expansion
$$g^{\psi}(z)=z+\frac{a-7}{3(a-1)}z^2+\cdots$$
whenever $a\neq 7$, and
$$g^{\psi}(z)=z+\frac{1}{27}z^4+\cdots$$
for $a=7$ (see \cite[Proposition 3.5]{BulLom20I}). In particular, $g^{\psi}$ has a parabolic fixed point at $0$ with multiplier $1$.
\end{remark}

In \cite[Proposition 3.8]{BulLom20I}, the authors showed that for each $a\in\mathcal{K}$, after a small perturbation of $\partial\Delta_{\cov}$ and $\partial\Delta_{\J}$ around $z=1$, we can choose a Klein combination pair $(\Delta'_{\cov},\Delta'_{\J})$ so that $\partial\Delta'_{\cov}$ and $\partial\Delta'_{\J}$ are both smooth at $1$, and transverse at $1$ to the line generated by the repelling direction at $z=1$ for $a\neq 7$, and to the real axis in the case $a=7$.

For $a\in\mathcal{K}$ and $(\Delta_{\cov},\Delta_{\J})$ as above, we define 
$$\Lambda_{a,+}\coloneqq\bigcap\limits_{n=0}^\infty \F^n(\widehat{\mathbb{C}}\setminus\Delta'_{\J})\hspace{.5cm}\mbox{ and }\hspace{.5cm}\Lambda_{a,-}\coloneqq\bigcap\limits_{n=0}^\infty \F^{-n}(\overline{\Delta'_{\J}})$$
to be the \emph{forward and backward limit set of $\F$}, respectively. These sets do not depend on the choice of the Klein combination pair $(\Delta'_{\cov},\Delta'_{\J})$ as above.

\begin{lemma}\label{lambdas} Let $a\in\mathcal{K}$. We have that
\begin{itemize}
\item[1.] $\J(\Lambda_{a,\pm})=\Lambda_{a,\mp}$ and $\J(\partial\Lambda_{a,\pm})=\partial\Lambda_{a,\mp}$,
\item[2.] $\Lambda_{a,-}\cap\Lambda_{a,+}=\lbrace 1\rbrace,$
\item[3.] if $z\notin\Lambda_{a,-}$, then there exists $n\geq 1$ so that $\F^{n}(z)\subset\widehat{\mathbb{C}}\setminus\Delta'_{\J}$, and if $z\notin\Lambda_{a,+}$, then there exists $n\geq 1$ such that $\F^{-n}(z)\subset\Delta'_{\J}$.
\item[4.] $\F^{-1}(\Lambda_{a,-})=\Lambda_{a,-}$ and $\F^{-1}(\partial\Lambda_{a,-})=\partial\Lambda_{a,-}$, and
\item[5.] $\F(\Lambda_{a,+})=\Lambda_{a,+}$ and $\F(\partial\Lambda_{a,+})=\partial\Lambda_{a,+}$.
\end{itemize}
\end{lemma}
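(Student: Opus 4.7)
The plan is to dispatch parts 1--3 and the set identities in parts 4--5 from the definitions, using the factorization $\F^{-1}=\J\circ\F\circ\J$ from Section \ref{thefamily} (which iterates to $\F^{-n}=\J\F^n\J$), the fact that the involution $\J$ fixes $\partial\Delta'_{\J}$ pointwise as a M\"obius involution (so $\J(\overline{\Delta'_{\J}})=\widehat{\mathbb{C}}\setminus\Delta'_{\J}$), the openness of $\F$ and $\F^{-1}$ from Remark \ref{open}, and the inclusions of Remark \ref{containment}.

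For part 1, conjugation yields
\[
\J(\Lambda_{a,-})=\bigcap_{n}\J\F^{-n}(\overline{\Delta'_{\J}})=\bigcap_{n}\F^{n}\J(\overline{\Delta'_{\J}})=\bigcap_{n}\F^{n}(\widehat{\mathbb{C}}\setminus\Delta'_{\J})=\Lambda_{a,+},
\]
and the boundary statement follows since $\J$ is a homeomorphism. For part 2, the definitions already place $\Lambda_{a,-}\cap\Lambda_{a,+}\subset\overline{\Delta'_{\J}}\cap(\widehat{\mathbb{C}}\setminus\Delta'_{\J})=\partial\Delta'_{\J}$; Remark \ref{containment}(1) then excludes every $p\in\partial\Delta'_{\J}\setminus\{1\}$ from $\Lambda_{a,-}$, while $1\in\F(1)$ together with induction and part 1 inserts $1$ into both limit sets. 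Part 3 unpacks the definition: if $z\notin\Lambda_{a,-}$, then some $n\geq 0$ yields $\F^{n}(z)\subset\widehat{\mathbb{C}}\setminus\overline{\Delta'_{\J}}\subset\widehat{\mathbb{C}}\setminus\Delta'_{\J}$, and the case $n=0$ is upgraded to $n=1$ via Remark \ref{containment}(1); the $\Lambda_{a,+}$-case is dual.

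For the set identities in parts 4 and 5, Remark \ref{containment}(2) supplies the nesting $\F^{-(n+1)}(\overline{\Delta'_{\J}})\subset\F^{-n}(\overline{\Delta'_{\J}})$; combined with finiteness of fibers (which lets $\F^{-1}$ commute with decreasing intersections of closed sets), one obtains
\[
\F^{-1}(\Lambda_{a,-})=\bigcap_{n\geq 1}\F^{-n}(\overline{\Delta'_{\J}})=\Lambda_{a,-},
\]
and $\F(\Lambda_{a,+})=\Lambda_{a,+}$ follows by conjugating with $\F\circ\J=\J\circ\F^{-1}$ and invoking part 1.

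The main obstacle is the boundary identity $\F^{-1}(\partial\Lambda_{a,-})=\partial\Lambda_{a,-}$ (the corresponding statement for $\partial\Lambda_{a,+}$ then being automatic by $\J$-conjugation). The inclusion $\supset$ is reachable: openness of $\F^{-1}$ makes $\F^{-1}(\interior\Lambda_{a,-})$ an open subset of $\F^{-1}(\Lambda_{a,-})=\Lambda_{a,-}$, hence of $\interior\Lambda_{a,-}$; so any $z\in\partial\Lambda_{a,-}$ satisfies $\F(z)\cap\Lambda_{a,-}\subset\partial\Lambda_{a,-}$, and non-emptiness of $\F(z)\cap\Lambda_{a,-}$ yields $z\in\F^{-1}(\partial\Lambda_{a,-})$. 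The converse inclusion is the delicate step: for $z\in\interior\Lambda_{a,-}$, I would have to show that no branch of $\F(z)=\J(\cov(z))$ lands on $\partial\Lambda_{a,-}$. The strategy is to exploit the Klein combination pair to split $\F(z)$ explicitly: one branch reproduces the hybrid-equivalent rational-map action $\f(z)\in\interior\Lambda_{a,-}$ from \cite{BulLom20I}, while the other, coming from $\J$ applied to the copy of $\cov(z)$ lying on the far side of $\partial\Delta'_{\cov}$, is pushed into $\widehat{\mathbb{C}}\setminus\overline{\Delta'_{\J}}$ and is therefore disjoint from $\Lambda_{a,-}\subset\overline{\Delta'_{\J}}$.
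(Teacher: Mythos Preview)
Your outline matches the paper's proof closely: parts 1--3 and the set identities in parts 4--5 are handled exactly as you describe (conjugation by $\J$, the nesting from Remark~\ref{containment}, and openness of $\F^{\pm 1}$ from Remark~\ref{open}), and your argument for the $\supset$ direction of the boundary identity is the paper's.

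For the delicate $\subset$ inclusion $\F^{-1}(\partial\Lambda_{a,-})\subset\partial\Lambda_{a,-}$, the paper does \emph{not} appeal to the hybrid conjugacy with $P_A$ from \cite{BulLom20I}. It argues internally: after recording $\Lambda_{a,-}\cap\partial\Delta'_{\J}=\{1\}$, it uses the same Klein-combination splitting of $\F(z')$ that you describe, but obtains $\f(z')\in\interior\Lambda_{a,-}$ directly by observing that $\F(\interior\Lambda_{a,-})\cap\Delta'_{\J}\subset\Lambda_{a,-}$ from the definition of $\Lambda_{a,-}$ and that this set is open by Remark~\ref{open}. This keeps the lemma self-contained. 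What your shortcut via the conjugacy would buy, on the other hand, is an automatic handling of the special preimage $-2\in\F^{-1}(1)$: the paper must supply a separate, somewhat ad hoc argument that $-2\in\partial\Lambda_{a,-}$ (tracking nearby points through $\F$ and $\partial\Delta'_{\J}$), whereas the conjugacy places $-2$ on the boundary immediately as the image of a Julia-set preimage of $\infty$. Your sketch does not flag this $w=1$ case, so if you carry out your plan you should note why $z\in\interior\Lambda_{a,-}$ forces $z\notin\{-2,1\}$.
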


\begin{proof}
Since $\J$ is an involution sending $\overline{\Delta'_{\J}}$ and $\widehat{\mathbb{C}}\setminus\Delta'_{\J}$ to each other, and since $\J\circ\F^n=\F^{-n}\circ\J$, then
$$\J(\Lambda_{a,+})=\bigcap\limits_{n=0}^\infty\J\circ\F^n(\widehat{\mathbb{C}}\setminus\Delta'_{\J})=\bigcap\limits_{n=0}^\infty\F^{-n}\circ\J(\widehat{\mathbb{C}}\setminus\Delta'_{\J})=\Lambda_{a,-},$$
and applying $\J$ to both sides we also get $\J(\Lambda_{a,-})=\Lambda_{a,+}$. Moreover, since $\J$ is continuous,

Note that 
$$\Lambda_{a,-}\cap\Lambda_{a,+}\subset\overline{\Delta'_{\J}}\cap(\widehat{\mathbb{C}}\setminus\Delta'_{\J})=\partial\Delta'_{\J}.$$
Since $(\Delta'_{\cov},\Delta'_{\J})$ is a Klein combination pair, $\widehat{\mathbb{C}}\setminus\Delta'_{\J}\subset\Delta'_{\cov}$, so $\partial\Delta'_{\J}\subset\overline{\Delta'_{\cov}}$.

If $z\in\Delta'_{\cov}$, then $\cov(z)\subset\widehat{\mathbb{C}}\setminus\Delta'_{\J}$, and $\F(z)=\J\circ\cov(z)\subset\Delta'_{\J}$. This is a contradiction, as $\F(z)$ must belong to $\Lambda_{a,+}\subset\widehat{\mathbb{C}}\setminus\Delta'_{\J}$ as well. Again, since $(\Delta'_{\cov},\Delta'_{\J})$ is a Klein combination pair, we have that $z\in\partial\Delta'_{\cov}\cap\partial\Delta'_{\J}=\lbrace 1\rbrace$ and we conclude that
$$\Lambda_{a,-}\cap\Lambda_{a,+}=\lbrace 1\rbrace.$$

We prove part $3$ by contrapositive. Suppose that for all $n$, there exists $w\in\F^{n}(z)\cap(\widehat{\mathbb{C}}\setminus\Delta'_{\J})$, then $z\in\F^{n}(w)\subset\F^{n}(\widehat{\mathbb{C}}\setminus\Delta'_{\J})$ for all $n$. This implies that $z$ must belong to $\Lambda_{a,-}$. The other case is analogous.

It is immediate from the definition of $\Lambda_{a,-}$ and from part $2$ of Remark \ref{containment} that 
\begin{equation}\label{1st}
\F^{-1}(\Lambda_{a,-})=\Lambda_{a,-}.
\end{equation} 
From this and Remark \ref{open},
\begin{equation}\label{2nd}
\F^{-1}(\interior(\Lambda_{a,-}))\subset\interior(\F^{-1}(\Lambda_{a,-}))=\interior(\Lambda_{a,-}).
\end{equation}
Observe that if $z\in\partial\Lambda_{a,-}\setminus\lbrace 1\rbrace\subset\Delta'_{\cov}$, then
$$\F(z)=\J(\cov(z))\subset\J(\widehat{\mathbb{C}})\subset\J(\Delta'_{\J})\subset\widehat{\mathbb{C}}\setminus\overline{\Delta'_{\J}}.$$
Since $\Lambda_{a,-}\subset\overline{\Delta'_{\J}}$, then we conclude that
\begin{equation}\label{intersection}
\Lambda_{a,-}\cap\partial\Delta'_{\J}=\lbrace 1\rbrace
\end{equation}
Put $w\in\partial\Lambda_{a,-}$ and $z\in\F^{-1}(w)\subset\Lambda_{a,-}$. We will show that $z\in\partial\Lambda_{a,-}$, by contrapositive. Suppose $w\neq 1$ and $z\in\interior(\Lambda_{a,-})$. 
Then equation (\ref{intersection}) implies that
$$w\in\F(\interior(\Lambda_{a,-}))\cap\Delta'_{\J}.$$
From Remark \ref{open}, and the fact that $\Delta'_{\J}$ is open, we have that $\F(\interior(\Lambda_{a,-}))\cap\Delta'_{\J}$ is open. Moreover, for each $z'\in\interior(\Lambda_{a,-})$, the set $\F(z')$ consists of a point in $\Delta'_{\J}$ and one in $\widehat{\mathbb{C}}\setminus\Delta'_{\J}$. Since the one in $\Delta'_{\J}$ is actually in $\Lambda_{a,-}$ by definition of $\Lambda_{a,-}$, then $\F(\interior(\Lambda_{a,-}))\cap\Delta'_{\J}\subset\Lambda_{a,-}$. Then $w\in\interior(\Lambda_{a,-})$. This proves that
\begin{equation}\label{3rd}
\F^{-1}(\partial\Lambda_{a,-})\subset\Lambda_{a,-}.
\end{equation}
As for $w=1$, we have that $z\in\F^{-1}(1)=\lbrace -2,1\rbrace$. We know $1\in\partial\Lambda_{a,-}$, so it suffices to show that $-2\in\partial\Lambda_{a,-}$ as well. Let $U\subset\widehat{\mathbb{C}}$ be an open neighborhood of $-2$. From Remark \ref{open}, we have that $\F(U)$ is an open neighborhood of $1$. Since $1\in\partial\Delta'_{\J}$, and $\partial\Delta'_{\J}$ is a Jordan curve, then there exists a point $w'\in U\cap\partial\Delta'_{\J}$, $w'\neq 1$. We have that $\F(\partial\Delta'_{\J}\setminus\lbrace 1\rbrace)\subset\widehat{\mathbb{C}}\setminus\overline{\Delta'_{\cov}}$, so $\F^{-1}(w')$ consists of two points, $z',z''\in\widehat{\mathbb{C}}\setminus\overline{\Delta'_{\cov}}$, with $z'\in U$. We will prove that $z'\notin\Lambda_{a,-}$ by showing that $\F(z')\cap\Lambda_{a,-}=\emptyset$, and thus $-2\in\Lambda_{a,-}\setminus\interior(\Lambda_{a,-})=\partial\Lambda_{a,-}$. Indeed,
$$\F(z')=\J\left(\lbrace z'',\J(w')\rbrace\right)=\lbrace\J(z''),w'\rbrace\subset\widehat{\mathbb{C}}\setminus\Delta'_{\J}.$$
From part $2$ of Remark \ref{containment}, we have that $z'\notin\Lambda_{a,-}$, and $-2\in\partial\Lambda_{a,-}$ as desired. This, together with equations (\ref{1st}), (\ref{2nd}) and (\ref{3rd}) prove part $4$.

We have that part $1$ of Remark \ref{containment} together with the definition of $\Lambda_{a,+}$ prove that $\F(\Lambda_{a,+})=\Lambda_{a,+}$. To prove the rest of part $5$, we use part $4$ together with the fact that $\F=\J\circ\F^{-1}\circ\J$ and $\J(\partial\Lambda_{a,\pm})=\partial\Lambda_{a,\mp}$, as $\J$ is a continuous involution. Thus, 
$$\F(\partial\Lambda_{a,+})=\J\circ\F^{-1}\circ\J(\partial\Lambda_{a,+})=\J(\F^{-1}(\partial\Lambda_{a,-}))=\J(\partial\Lambda_{a,-})=\partial\Lambda_{a,+}.$$
\end{proof}

The following definition is from \cite{McG92} and \cite{bhasri16}.

\begin{definition}
Let $F$ be a holomorphic correspondence on $X$. We say that $\mathcal{R}\subset X$ is a \emph{repeller} for $F$ if there exists a set $U$ such that $\mathcal{R}$ is contained in the interior of $U$, and
$$\mathcal{R}=\bigcap_{K\in\mathfrak{K}(U,F^{-1})} K,$$
where
$$\mathfrak{K}(U,F^{-1})\coloneqq\lbrace K\subset X|F^{-1}(K)\subset K\mbox{ and }F^{-n}(U)\subset K\mbox{ for some }n\geq 0\rbrace.$$
\end{definition}
Bharali and Sridharan proved an equidistribution result similar to the one in this paper \cite[Theorem 3.5]{bhasri16} for correspondences having a repeller. Moreover, in \cite[Section 7.2]{bhasri16} they showed that there is a set of pairs $(a,k)$ for which their result can be applied to the correspondence
$$\left(\frac{az+1}{z+1}\right)^2+\left(\frac{az+1}{z+1}\right)\left(\frac{aw-1}{w-1}\right)+\left(\frac{aw-1}{w-1}\right)^2=3k.$$
For these correspondences (studied by Bullett and Harvey in \cite{BulHar00}), there is a set $\Lambda_{a,-}$ analogous to the one presented here (see \cite{BulPen01} for the general definition of limit sets). In order for the pair $(a,k)$ to work for their theorem, it is crucial for $\partial\Lambda_{a,-}$ to be a repeller for the correspondence. Nevertheless, part $2$ of Theorem \ref{A} says this never happens for $k=1$ and $|a-4|\leq 3$.

\begin{proof}[Proof of Theorem \ref{A} part 2.]

Let $U\subset\widehat{\mathbb{C}}$ contain $\partial\Lambda_{a,-}$ in its interior. We have that $1\in\partial\Lambda_{a,-}$ and is a parabolic fixed point of the function $g$ whose graph is contained in $\Gamma_a$, described in Remark \ref{taylor}. Take an attracting petal $\mathcal{P}$ at $1$ so that $\mathcal{P}\subset U$. We first show that every $K\in\mathfrak{K}(U,\F^{-1})$ contains $\partial\Lambda_{a,-}\cup\mathcal{P}$. Indeed, for every $K\in\mathfrak{K}(U,\F^{-1})$ and some integer $n\geq 0$,

\begin{equation}\label{petal}
\mathcal{P}\subset g^{-n}(\mathcal{P})\subset\F^{-n}(\mathcal{P})\subset\F^{-n}(U)\subset K.
\end{equation}

Moreover, from part 4 of  Lemma \ref{lambdas} we have that
\begin{equation}\label{limit}
\partial\Lambda_{a,-}\subset\F^{-n}(U)\subset K.
\end{equation}

Putting (\ref{petal}) and (\ref{limit}) together, we get that every $K\in\mathfrak{K}(U,\F^{-1})$ contains  the union $\partial\Lambda_{a,-}\cup\mathcal{P}$, and therefore so does the intersection over all $K$. Since $\interior(\partial\Lambda_{a,-})=\emptyset$ and $\interior(\mathcal{P})\neq\emptyset$, we have that
$$\partial\Lambda_{a,-}\subsetneq\partial\Lambda_{a,-}\cup\mathcal{P}\subset\bigcap_{K\in\mathfrak{K}(U,\F^{-1})} K.$$
Since $U$ is arbitrary, we conclude that $\partial\Lambda_{a,-}$ is not a repeller for $\F$.
\end{proof}

\section{Exceptional Set and Periodic Points}\label{exceptional and periodic}
In this section we will define a two-sided restriction of $\F$ and prove it is a proper holomorphic map of degree 2. We will find its exceptional set and the one of $\F$. This will be important for the next section, as it is the set of all points that may escape from the equidistribution property given in Theorem \ref{B}.

The following definition is classical.

\begin{definition} Let $f:U\to V$ be a holomorphic proper map, with $U,V$ open, $U\subset V$. For $z\in U$, we denote by $[ z]$ the equivalence class of $z$ by the equivalence relation
$$w\sim z\Leftrightarrow\exists n,m\in\mathbb{Z}^+\cup\lbrace 0\rbrace, f^n(w)=f^n(z).$$
We say that $z$ is \emph{exceptional} for $f$ if $[ z]$ is finite, and we call \emph{exceptional set} the set $\mathcal{E}$ of all points that are exceptional for $f$.
\end{definition}

Put $a\in\mathcal{K}$ and denote by $\f$ the two-sided restriction $\restr{\F}{}:\F^{-1}(\Delta'_{\J})\to \Delta'_{\J}$, meaning $\f$ sends each $z\in\F^{-1}(\Delta'_{\J})$ to the unique point in $\F(z)\cap \Delta'_{\J}$. This it is a single-valued, continuous and holomorphic, 2-to-1 map (see \cite[Proposition 3.4]{BulLom20I} and Theorem \ref{exc} below) that extends on a neighborhood of every point $z\in\partial\F^{-1}(\Delta_{\J})\setminus\lbrace -2\rbrace$. In particular, $\f$ extends around $z=1$ and $\f(1)=1$. Since $\Delta'_{\J}$ is open and $\f$ is continuous, then $\F^{-1}(\Delta'_{\J})(=\f^{-1}(\Delta'_{\J}))$ is open as well. Note that the analogous of part 2 of Remark \ref{containment} holds for $\Delta'_{\J}$ and $\Delta'_{\cov}$ instead, using the definition of Klein pair and the fact that $\J$ is open. Thus, $\F^{-1}(\Delta'_{\J})\subset\Delta'_{\J}$.
\vspace{.3cm}

The following theorem is the main result of this section.

\begin{theorem}\label{exc} For each $a\in\mathcal{K}$, we have the following.
\begin{itemize}
\item[1.] The two-sided restriction $\f:\F^{-1}(\Delta'_{\J})\to\Delta'_{\J}$ of $\F$ is holomorphic and proper, of degree $2$.
\item[2.] The map $\f$ has a critical point if and only if $2\in\widehat{\mathbb{C}}\setminus\overline{\Delta'_{\J}}$. Furthermore, in that case we have that the critical point is $-1$.
\item[3.] The exceptional set $\mathcal{E}_{a,-}$ of $\f$ is nonempty if and only if $a=5$. In that case, $\mathcal{E}_{a,-}=\lbrace -1\rbrace$.
\end{itemize}
\end{theorem}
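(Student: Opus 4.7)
The plan is to handle parts 1, 2, 3 in turn, relying on Proposition~\ref{b2}, Lemma~\ref{cov lemma} and the Klein-combination properties from Remark~\ref{containment}.

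For part 1, the first step is to verify single-valuedness: suppose for contradiction that some $z \in \F^{-1}(\Delta'_{\J})$ has $|\F(z) \cap \Delta'_{\J}| = 2$. Writing $\F = \J \circ \cov$ and using $\J(\Delta'_{\J}) = \widehat{\mathbb{C}} \setminus \overline{\Delta'_{\J}} \subset \Delta'_{\cov}$ (the inclusion being a consequence of the Klein combination), one obtains $\cov(z) \subset \Delta'_{\cov}$. Since the three roots of $Q(w) - Q(z)$ are exactly $\{z\} \cup \cov(z)$, writing $\cov(z) = \{v_1, v_2\}$ yields $v_2 \in \cov(v_1) \cap \Delta'_{\cov}$, which contradicts the fundamental domain property of $\Delta'_{\cov}$; the degenerate case $v_1 = v_2$ is handled by direct inspection against Lemma~\ref{cov lemma}. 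Holomorphy then follows from the implicit function theorem off $A_1(\Gamma_a) \cup A_2(\Gamma_a)$ together with Riemann's removable singularity theorem at the finitely many points of this set. For properness, a sequence $z_n \in \f^{-1}(K)$ with $K \subset \Delta'_{\J}$ compact has a convergent subsequence $z_n \to z^*$ with $(z^*, w^*) \in \Gamma_a$ for some $w^* \in K$, and Remark~\ref{containment} part~2 applied to $\Delta'_{\J}$ places $z^* \in \F^{-1}(\Delta'_{\J})$, so $z^* \in \f^{-1}(K)$ by single-valuedness. The degree computation is then immediate: the same remark gives $\F^{-1}(w) \subset \Delta'_{\J}$ for every $w \in \Delta'_{\J}$, so $\f^{-1}(w) = \F^{-1}(w)$, which has generic cardinality $d(\F) = 2$.

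For part 2, a critical point $z$ of $\f$ corresponds precisely to a ramification point $(z, \f(z)) \in A_2(\Gamma_a)$ whose second coordinate lies in $\Delta'_{\J}$, so by Proposition~\ref{b2} there are only three candidates. The value $\J(\infty) = (a+1)/2$ is the center of the circle $\partial\Delta_{\J}$ through $1$ and $a$, hence lies strictly inside it and therefore in $\widehat{\mathbb{C}} \setminus \overline{\Delta'_{\J}}$. The value $\J(-2) = (4a+2)/(a+5)$ lies in $\widehat{\mathbb{C}} \setminus \overline{\Delta'_{\J}}$ because Remark~\ref{containment} part~1 gives $-2 \in \Delta'_{\J}$. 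Only $(-1, 2/(3-a))$ can qualify, and $2/(3-a) = \J(2) \in \Delta'_{\J}$ exactly when $2 \in \J(\Delta'_{\J}) = \widehat{\mathbb{C}} \setminus \overline{\Delta'_{\J}}$.

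For part 3, I would first establish the following classification: if $f : U \to V$ is a proper holomorphic map of degree $d = 2$ with $U \subset V$ and $z$ is exceptional, then its grand orbit $[z]$ is fully invariant, and summing the identity $\sum_{w \in f^{-1}(y)} \delta_f(w) = d$ over $y \in [z]$ gives $\sum_{w \in [z]} \delta_f(w) = 2\,|[z]|$; this forces either $|[z]| = 1$ with $\delta_f(z) = 2$ (a totally ramified fixed point) or $|[z]| = 2$ with both points critical forming a totally ramified $2$-cycle. Since $\f$ has at most one critical point by part~2, only the first case occurs, so $\mathcal{E}_{a,-} \subseteq \{-1\}$. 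The point $-1$ is exceptional exactly when $\f(-1) = -1$ and $\f^{-1}(-1) = \{-1\}$ with local degree $2$; solving $2/(3-a) = -1$ yields $a = 5$, and for $a = 5$ one verifies that $2$ lies strictly inside the circle $|z-3| = 2$ (so $-1$ is indeed critical by part~2), computes $\J(-1) = 2$, and invokes $(2, -1) \in A_1(\Gamma_{\cov})$ from Lemma~\ref{cov lemma} to conclude $\f^{-1}(-1) = \cov(2) = \{-1\}$ with multiplicity $2$. The main obstacle is the degree-$2$ classification of exceptional sets in this nested-domain setting; the analogous statement for rational maps of $\widehat{\mathbb{C}}$ is classical, but here the local-degree counting argument must be run directly on the finite fully-invariant set $[z]$ without recourse to Fatou--Julia theory on the sphere.
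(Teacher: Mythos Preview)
Your argument is largely correct and follows the same overall strategy as the paper: identify the ramification points of $\Gamma_a$ via Proposition~\ref{b2}, rule out all but $(-1,\J(2))$ by locating the second coordinate relative to $\Delta'_{\J}$, and then use a counting argument on the finite fully-invariant set to force $\mathcal{E}_{a,-}\subset\{-1\}$ with $-1$ a fixed critical point. Your single-valuedness argument in part~1 (via the three roots of $Q(w)-Q(z)$ and the fundamental-domain property of $\Delta'_{\cov}$) is more explicit than the paper, which partly defers to \cite{BulLom20I}; your sequential properness argument and the paper's closed-map argument are interchangeable.

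There is one genuine but easily repaired gap. In part~2 you dispose of $\left(\infty,\tfrac{a+1}{2}\right)$ by saying that $\tfrac{a+1}{2}$ is the \emph{center of the circle} $\partial\Delta_{\J}$. That description is only available for the specific Klein pair constructed in \cite{BulLom20I} when $|a-4|\le 3$, whereas the theorem is stated for arbitrary $a\in\mathcal{K}$, where $\partial\Delta'_{\J}$ is merely a Jordan curve. The paper's argument is the general one: $\infty$ is fixed by $\cov$, so $\infty\notin\Delta'_{\cov}$ by the fundamental-domain property, hence $\infty\in\Delta'_{\J}$ by the Klein combination condition; since $\J$ swaps $\Delta'_{\J}$ with $\widehat{\mathbb{C}}\setminus\overline{\Delta'_{\J}}$, it follows that $\J(\infty)=\tfrac{a+1}{2}\notin\Delta'_{\J}$. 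Replacing your circle-center remark with this two-line observation makes your proof valid for all of $\mathcal{K}$. (The same caution applies, less critically, to your verification that $2\in\widehat{\mathbb{C}}\setminus\overline{\Delta'_{\J}}$ when $a=5$; since $5$ lies in the disk $|a-4|\le 3$ this particular use of the explicit circle is legitimate.)
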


Computing images and preimages under $\F$, we see that when $a=5$, we have that $\circlearrowright -1\mapsto 2\circlearrowleft$. In the following section, it will be useful to use the full orbit of $\mathcal{E}_{a,-}$ under $\F$, meaning
$$\E\coloneqq\left\lbrace\begin{matrix}
\emptyset & \mbox{ if }a\neq 5\\
\lbrace -1,2\rbrace & \mbox{ if }a=5.\end{matrix}\right.$$

\begin{proof}[Proof of Theorem \ref{exc}]
We first show part 1. Observe that $\infty$ and $\J(\infty)$ lie on opposite sides of $\partial\Delta_{\J}$. Therefore, $\left(\infty,\frac{a+1}{2}\right)\notin(\F^{-1}(\Delta'_{\J})\times\Delta'_{\J})$. In view of Proposition \ref{b2}, this implies that for every 
$$(z_0,w_0)\in\graph(\f)=(\F^{-1}(\Delta'_{\J})\times \Delta'_{\J})\cap\Gamma_a,$$
there there exists a neighborhood $U$ of $(z_0,w_0)$ such that $U\cap\Gamma_a$ is the graph of a holomorphic function either in $z$ or in $w$. Therefore $\graph(\f)$ is an open subset of $\Gamma_a\setminus\left\lbrace\left(\infty,\frac{a+1}{2}\right)\right\rbrace$, which has no singularities. Thus, $\graph(\f)$ is a Riemann surface and $\f$ is holomorphic.

It is clear that $\f^{-1}(w)$ is compact, for all $w\in\Delta_{\J}$, since 
$$1\leq|\f^{-1}(w)|\leq|\F^{-1}(w)|\leq 2.$$
Moreover, $\f$ is a closed map, and therefore a proper map. Indeed, let $C\subset\F^{-1}(\Delta'_{\J})$ be closed. Then $C=C'\cap\F^{-1}(\Delta'_{\J})$ for some closed subset $C'\subset\widehat{\mathbb{C}}$. Observe that $\f(C)=\F(C')\cap\Delta'_{\J}$ and that
$$\F(C')=\pi_2\left(\Gamma_a\cap(C'\times\widehat{\mathbb{C}})\right).$$
By compactness of $\widehat{\mathbb{C}}$, $\pi_2$ is closed, and since both $\Gamma_a$ and $C'\times\widehat{\mathbb{C}}$ are closed subsets of $\widehat{\mathbb{C}}\times\widehat{\mathbb{C}}$, then $\F(C')$ is closed. Therefore $\f(C)$ is closed in $\Delta'_{\J}$ and $\f$ is a closed map. We conclude $\f$ is proper. Moreover, by definition of $\f$, every preimage $z$ of a point $w\in\Delta'_{\J}$ belongs to the domain of $\f$, and $\f(z)=w$. Therefore $\f$ has degree $2$, since $\F$ has $2$ preimages of a generic point in $\Delta'_{\J}$.

We proceed to show part 2. Let $z_0$ be a critical point of $\f$. Then $(z_0,\f(z_0))$ belongs to
$$A_2(\Gamma_a)\cap(\F^{-1}(\Delta'_{\J})\times\Delta'_{\J}).$$ 
Observe that $\F^{-1}(\Delta'_{\J})\subset\Delta'_{\J}$ and $1\notin\Delta'_{\J}$, so $(1,\J(-2))\notin\F^{-1}(\Delta'_{\J})\times\Delta'_{\J}$. Since we also have that $\infty$ and $\J(\infty)$ lie on opposite sides of $\partial\Delta_{\J}$ and from Proposition \ref{b2}, it must be the case that $(z_0,\f(z_0))=(-1,\J(2))$. Thus, $z_0=-1$ and $\J(2)\in\Delta'_{\J}$, and therefore $2\in\widehat{\mathbb{C}}\setminus\overline{\Delta'_{\J}}$.
To prove the reverse implication, note that whenever $2\in\widehat{\mathbb{C}}\setminus\overline{\Delta'_{\J}}$, we have that $\J(2)=\frac{2}{3-a}\in\Delta'_{\J}$, so there exists $z_0\in\F^{-1}(\Delta'_{\J})$ such that $\f(z_0)=\frac{2}{3-a}$, by definition of $\f$. Then $z_0=-1\in\F^{-1}(\Delta'_{\J})$ is a critical point for $\f$, as $\f$ has degree $2$ and $\f^{-1}\left(\frac{2}{3-a}\right)=\lbrace -1\rbrace$. This proves that $\f$ has a critical point if and only if $2\in\widehat{\mathbb{C}}\setminus\overline{\Delta'_{\J}}$, and the critical point is $z_0=-1$.

Finally, we prove part 3. Observe that if $w\in\f^{-1}(z)$, then $[ w]=[ z]$, so $\f^{-1}(\mathcal{E}_{a,-})\subset\mathcal{E}_{a,-}$ and thus, $|\f^{-1}(\mathcal{E}_{a,-})|\leq|\mathcal{E}_{a,-}|$. In particular, all points of $\mathcal{E}_{a,-}$ must have only one preimage under $\f$, and therefore, be critical. By part 2, if $\mathcal{E}_{a,-}\neq\emptyset$, then $-1$ must be fixed.
We know $\f(-1)=\frac{2}{3-a}$, and therefore if $\mathcal{E}_{a,-}\neq\emptyset$, then $\frac{2}{3-a}=-1$. Solving the equation we get that $a=5$. On the other hand, if $a=5$, then $2\in\widehat{\mathbb{C}}\setminus\Delta'_{\J}$ and therefore $z_0=-1$ is a critical point for $\f$ that is fixed.
\end{proof}

\begin{remark} Put $a\neq 1$, $|a-4|\leq 3$, and $(\Delta_{\cov},\Delta_{\J})$ the Klein combination pair from \cite{BulLom20I} described in Section \ref{thefamily}. Away from $z=1$, $\Delta'_{\J}$ agrees with $\Delta_{\J}$. Let $r$ be the radius of the circle $\partial\Delta_{\J}$. Since the circle passes through $a$ and has center $(1+r)$, then $2\in\widehat{\mathbb{C}}\setminus\overline{\Delta_{\J}}$ if and only if $r>\frac{1}{2}$. Equivalently, $a\notin\overline{B}\left(\frac{3}{2},\frac{1}{2}\right)$. Since $2$ is away from $z=1$, $2\in\widehat{\mathbb{C}}\setminus\overline{\Delta'_{\J}}$ if and only if $a\notin\overline{B}\left(\frac{3}{2},\frac{1}{2}\right)$ as well.
\begin{itemize}
\item if $0<r\leq\frac{1}{2}$, then $\f:\F^{-1}(\Delta'_{\J})\to\Delta'_{\J}$ has no critical points and hence, is an unramified covering map. Since $\f$ is a 2-to-1 map, the Riemann-Hurwitz formula yields
$$\chi(\F^{-1}(\Delta'_{\J}))=2\chi(\Delta'_{\J})=2$$
where $\chi$ denotes the Euler characteristic. Since $\f$ has degree 2, $\F^{-1}(\Delta'_{\J})$ has at most $2$ connected components, each of them with Euler characteristic at most $1$. It follows that $\F^{-1}(\Delta'_{\J})$ has exactly $2$ connected components, each of them homeomorphic to a disk.
\item if $\frac{1}{2}<r<3$, then $\f$ is a ramified covering of degree $2$ with one critical point at $z_0=-1$, whose ramification index equals 2. Therefore, again by the Riemann-Hurwitz formula, we get that
$$\chi(\F^{-1}(\Delta'_{\J}))=2\chi(\Delta'_{\J})-(2-1)=1.$$
Let $\Omega$ be the component of $\F^{-1}(\Delta'_{\J})$ that contains $-1$. Then $\restr{f}{\Omega}$ is not locally injective at $-1$, and therefore of degree $2$. Therefore $\F^{-1}(\Delta'_{\J})=\Omega$ has one connected component, which is homeomorphic to a circle, and mapped 2-to-1 onto $\Delta'_{\J}$.
\end{itemize}
\end{remark}

We finish this section by listing some properties of the periodic points of $\F$. In order to do so, recall that
$$\Per_n(F)=\pi_1(\Gamma^{(n)}\cap\mathfrak{D}_{X}),$$
where $\Gamma^{(n)}$ is the graph of $F^n$ and $\mathfrak{D}_{X}$ is the diagonal in $X\times X$. Note that if $F$ is a correspondence on $X$,
 $$\Per_n(F)=\lbrace z\in X|z\in F^n(z)\rbrace=\lbrace z\in X|z\in F^{-n}(z)\rbrace.$$

\begin{lemma}\label{periodic} 
For every $a\in\mathcal{K}$, we have that
\begin{itemize}
\item[1.] $\Per_n(\F)\subset\Lambda_{a,-}\cup\Lambda_{a,+}$,
\item[2.] $\J(\Per_n(\F))=\Per_n(\F)$,
\item[3.] $\J(\Per_n(\F)\cap\Lambda_{a,\pm})=\Per_n(\F)\cap\Lambda_{a,\mp}$, and
\item[4.] $\Per_n(\F)\cap\Lambda_{a,-}=\Per_n(\f)$.
\end{itemize}
\end{lemma}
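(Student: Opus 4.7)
The plan is to handle the four parts in order, drawing repeatedly on Lemma \ref{lambdas} and Remark \ref{containment}. For Part 1, I would argue by contrapositive: if $z \in \Per_n(\F)$ and $z \notin \Lambda_{a,-}$, then Lemma \ref{lambdas} part 3 supplies an $m$ with $\F^m(z) \subset \widehat{\mathbb{C}} \setminus \Delta'_{\J}$, while the forward invariance $\F(\widehat{\mathbb{C}} \setminus \Delta'_{\J}) \subset \widehat{\mathbb{C}} \setminus \Delta'_{\J}$ of Remark \ref{containment} part 1 makes the family $\lbrace\F^j(\widehat{\mathbb{C}}\setminus\Delta'_{\J})\rbrace_j$ decreasing. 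Since $z \in \F^n(z)$ implies $z \in \F^{nk}(z) \subset \F^{nk}(\widehat{\mathbb{C}}\setminus\Delta'_{\J})$ for all $k$, taking $nk \geq j$ yields $z \in \Lambda_{a,+}$. For Part 2, the identity $\F^n = \J \circ \F^{-n} \circ \J$ (from $\F^{-1} = \J \circ \F \circ \J$ derived in Section \ref{thefamily}) together with $\Per_n(\F) = \Per_n(\F^{-1})$ and the fact that $\J$ is an involution give the symmetry immediately. Part 3 then follows by intersecting with $\Lambda_{a,\pm}$ and invoking $\J(\Lambda_{a,\pm}) = \Lambda_{a,\mp}$ from Lemma \ref{lambdas} part 1.

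For Part 4, the inclusion $\Per_n(\f) \subset \Per_n(\F) \cap \Lambda_{a,-}$ reduces to a decreasing nested sets argument: the backward invariance $\F^{-1}(\overline{\Delta'_{\J}}) \subset \overline{\Delta'_{\J}}$ from Remark \ref{containment} part 2 makes $\lbrace\F^{-j}(\overline{\Delta'_{\J}})\rbrace_j$ decreasing, and $z = \f^n(z) \in \overline{\Delta'_{\J}}$ gives $z \in \F^{-nk}(z) \subset \F^{-nk}(\overline{\Delta'_{\J}})$ for all $k$, so $z \in \Lambda_{a,-}$.

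The main obstacle lies in the reverse direction of Part 4. Given $z \in \Per_n(\F) \cap \Lambda_{a,-}$, I would select a realizing chain $z_0 = z, z_1, \ldots, z_n = z$ with $z_{i+1} \in \F(z_i)$. Running $\F^{-1}(\Lambda_{a,-}) = \Lambda_{a,-}$ (Lemma \ref{lambdas} part 4) backwards from $z_n$ shows that every $z_i$ belongs to $\Lambda_{a,-}$; combined with $\Lambda_{a,-} \cap \partial\Delta'_{\J} = \lbrace 1\rbrace$ (see equation (\ref{intersection})), each $z_i$ lies in $\Delta'_{\J} \cup \lbrace 1\rbrace$. The delicate point is forbidding the chain from passing through $1$ unless $z = 1$: I would show $\F(1) \cap \Lambda_{a,-} = \lbrace 1\rbrace$ by using $\F(1) = \lbrace 1, \J(-2)\rbrace$, the fact that $-2 \in \partial\Lambda_{a,-}$ (established in the proof of Lemma \ref{lambdas} part 4) hence $\J(-2) \in \partial\Lambda_{a,+}$ via Lemma \ref{lambdas} part 1, together with $\Lambda_{a,-} \cap \Lambda_{a,+} = \lbrace 1\rbrace$ and $\J(-2) \neq 1$ for $a \neq 1$. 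Any $z_k = 1$ would then force $z_{k+1} = 1$ and inductively $z_n = 1$, contradicting $z \neq 1$. Once every $z_i \in \Delta'_{\J}$, the single-valuedness of $\f$ on $\F^{-1}(\Delta'_{\J})$ gives $z_i = \f(z_{i-1})$, so $z = \f^n(z)$; the case $z = 1$ is immediate from $\f(1) = 1$.
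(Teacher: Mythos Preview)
Your proposal is correct. Parts 1--3 match the paper's proof essentially verbatim; the only quibble is in Part~1, where ``$z \in \F^{nk}(z) \subset \F^{nk}(\widehat{\mathbb{C}}\setminus\Delta'_{\J})$'' should read $\F^{nk-m}(\widehat{\mathbb{C}}\setminus\Delta'_{\J})$, since you only know $\F^m(z)\subset\widehat{\mathbb{C}}\setminus\Delta'_{\J}$, not $z$ itself. The nestedness then still gives $z\in\Lambda_{a,+}$.

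Part~4 is where your route genuinely differs. The paper argues \emph{backwards}: for $z\in\F^{-1}(\Delta'_{\J})\cup\lbrace1\rbrace$ one has $\F^{-n}(z)=\f^{-n}(z)$ (because $\f$ is a two-sided restriction, so $\f^{-1}=\F^{-1}$ on $\Delta'_{\J}$, and Remark~\ref{containment} part~2 keeps preimages inside $\Delta'_{\J}$). Then $z\in\Per_n(\F)\cap\Lambda_{a,-}$ gives $z\in\F^{-n}(z)=\f^{-n}(z)$ directly, with no need to analyze the point $1$ separately. Your approach instead runs \emph{forward} along a realizing chain, forces each $z_i$ into $\Lambda_{a,-}$ via $\F^{-1}(\Lambda_{a,-})=\Lambda_{a,-}$, and then must rule out the chain touching $1$ --- which you do correctly by showing $\F(1)\cap\Lambda_{a,-}=\lbrace1\rbrace$. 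The backward argument is shorter precisely because $\f^{-1}$ captures \emph{all} $\F$-preimages, whereas $\f$ captures only one of the two forward images; your forward argument compensates by the extra boundary analysis. Both work, and yours makes the role of the parabolic point $1$ more transparent.
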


\begin{proof}
In order to prove part $1$, suppose $z\in\Per_n(\F)$. If $z\in\overline{\Delta'_{\J}}$, then $z\in\F^{-kn}(z)\subset\F^{-kn}(\overline{\Delta'_{\J}})$ for all $k\geq 1$. Since the sets $\F^{-k}(\overline{\Delta'_{\J}})$ are nested and their intersection is $\Lambda_{a,-}$, then $z\in\Lambda_{a,-}$. On the other hand, if $z\notin\Lambda_{a,-}$, Lemma \ref{lambdas} part $3$, there exists $m\geq 1$ so that $\F^m(z)\subset\widehat{\mathbb{C}}\setminus\Delta'_{\J}$. A similar argument as above shows that for all $k$ sufficiently large, $z\in\F^k(\widehat{\mathbb{C}}\setminus\overline{\Delta'_{\J}})$, which implies that $z\in\Lambda_{a,+}$.

Next, note that if $z\in\Per_n(\F)$, then $z\in\F^{-n}(z)$ and $\J(z)\in\J(\F^{-n}(z))=\F^n(\J(z))$. Therefore $\J(z)\in\Per_n(\F)$ as well. Since $\J$ is an involution, this shows part $2$.

Recall from Remark \ref{lambdas} that $\J$ sends the limits sets to each other. Therefore, $\J(\Per_n(\F)\cap\Lambda_{a,-})=\Per_n(\F)\cap\Lambda_{a,+}$, and again since $\J$ is an involution, we also have that 
$$\J(\Per_n(\F)\cap\Lambda_{a,+})=\Per_n(\F)\cap\Lambda_{a,-}.$$
This proves part $3$.

Finally, we prove part $4$. We have that for each $z\in\F^{-1}(\Delta'_{\J})\cup\lbrace 1\rbrace$, $\F^{-n}(z)=\f^{-n}(z)$ and part $1$ implies that all periodic points of $\F$ not in $\Lambda_{a,+}$ must belong to $\Lambda_{a,-}$. Since $z\in\Per_n(\f)$ if and only if $z\in\f^{-n}(z)=\F^{-1}(z)$, then
$$\Per_n(\F)\cap\Lambda_{a,-}=\Per_n(\F)\cap(\F^{-1}(\Delta'_{\J})\cup\lbrace 1\rbrace)=\Per_n(\f).$$

\end{proof}

\section{Equidistribution of $\lbrace\F^n\rbrace_n$}\label{equidistribution}

In this section we prove theorems \ref{B} and \ref{C} using the results from \cite{Lju83} and \cite{FreLopMan83}, together with \cite{BulLom20I} and \cite{BulLom20II}.

Recall every $P_A$ of the form
\begin{equation}\label{parabolic quadratic}
P_A(z)=z+1/z+A,
\end{equation}
with $A\in\mathbb{C}$, has critical points $\pm 1$ and a parabolic fixed point at $\infty$ with multiplier equal to $1$. Now note that in the coordinate $\phi(z)=1/z$ we have that 
$$P_A^{\phi}(z)=\frac{z}{z^2+Az+1}=z-Az^2+(A^2-1)z^3-\dots$$
near $0$. Thus, $z=0$ is a fixed point of $P_A^{\phi}$ with multiplicity $2$ if $A\neq 0$, and $3$ if $A=0$. We conclude that for $A\neq 0$, the parabolic fixed point has multiplicity $1$, and $3$ for $A=0$.

For $A\in\mathbb{C}\setminus\lbrace 0\rbrace$, let $\Omega_A$ be the basin of attraction of $\infty$, and for $A=0$, we make the choice $\Omega_0=\lbrace x+iy| x,y\in\mathbb{R},x>0\rbrace$. We define the \emph{filled Julia set} of $P_A$ as the set
$$K_{P_A}=\widehat{\mathbb{C}}\setminus\Omega_A.$$

\begin{remark}\label{filled} No periodic points of $P_A$ live in $\Omega_A$. Indeed, it is clear that points in $\Omega_A$ for $A\neq 0$ cannot be periodic, as their iterates converge to $\infty$. Moreover, it is easy to check that if $\Re(z)$ denotes the real part of $z\neq 0$, then $\Re(P_A(z))=\Re(z)\frac{|z|^2+1}{|z|^2}$. Then $\Omega_0$ is completely invariant, and has no periodic points.  Therefore, for all $A\in\mathbb{C}$ we have that $\Per_n(P_A)\subset K_{P_A}$, for all $n\geq 1$.
\end{remark}

\begin{lemma}\label{mu}
For every $a\in\mathcal{K}$, there exists a measure $\mu_-$ with $\supp(\mu_-)=\partial\Lambda_{a,-}$ such that
$$\frac{1}{2^n}(\f^n)^*\delta_{z_0}\to\mu_-$$
weakly, for all $z_0\in\Delta_{\J}\setminus\mathcal{E}_{a,-}$.
\end{lemma}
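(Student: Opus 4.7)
The plan is to transfer the classical equidistribution of preimages under quadratic rational maps (Ljubich \cite{Lju83}, Freire–Lopes–Mañé \cite{FreLopMan83}) to $\f$ via the Bullett–Lomonaco hybrid equivalence, using that $\f:\F^{-1}(\Delta'_{\J})\to\Delta'_{\J}$ is a proper degree-$2$ holomorphic map by Theorem \ref{exc}.

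First, I would invoke the main result of \cite{BulLom20I}: $\f$ is a parabolic-like map of degree $2$ that is hybrid equivalent to a quadratic rational map of the form $P_A(z)=z+1/z+A$ for some $A\in\mathbb{C}$. This yields a quasiconformal homeomorphism $\Phi$ from a neighborhood of $\Lambda_{a,-}$ inside $\Delta'_{\J}$ onto a neighborhood of $K_{P_A}$, conformal on $\Lambda_{a,-}$, satisfying $\Phi\circ\f=P_A\circ\Phi$ and $\Phi(\Lambda_{a,-})=K_{P_A}$, $\Phi(\partial\Lambda_{a,-})=J(P_A)$. Next, I would apply \cite{Lju83}, \cite{FreLopMan83} to the rational map $P_A$ of degree $2$: there exists a Borel probability measure $\nu_A$ with $\supp(\nu_A)=J(P_A)$ such that $\frac{1}{2^n}(P_A^n)^*\delta_{w_0}\to\nu_A$ weakly for every $w_0\in\widehat{\mathbb{C}}$ outside the (at most two-point) classical exceptional set of $P_A$.

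Then I would define $\mu_-\coloneqq(\Phi^{-1})_*\nu_A$, so that $\supp(\mu_-)=\Phi^{-1}(J(P_A))=\partial\Lambda_{a,-}$. For $z_0\in\Delta_{\J}\setminus\mathcal{E}_{a,-}$, applying $\Phi$ to both sides of the conjugacy and using that $\Phi$ is a homeomorphism (hence pushforward commutes with weak limits), one obtains
\begin{equation*}
\Phi_*\!\left(\frac{1}{2^n}(\f^n)^*\delta_{z_0}\right)=\frac{1}{2^n}(P_A^n)^*\delta_{\Phi(z_0)}\longrightarrow \nu_A,
\end{equation*}
so $\frac{1}{2^n}(\f^n)^*\delta_{z_0}\to\mu_-$ weakly, provided $\Phi(z_0)$ is not exceptional for $P_A$. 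The matching of exceptional sets was done in Theorem \ref{exc} part $3$: $\mathcal{E}_{a,-}$ is empty unless $a=5$, in which case $-1$ is the unique critical fixed point, whose image under $\Phi$ is the corresponding critical fixed point of $P_A$, i.e.\ the exceptional point of $P_A$.

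The main obstacle is making the transfer rigorous when $z_0\in\Delta_{\J}\setminus\Lambda_{a,-}$, where the conjugacy is only quasiconformal and where $z_0$ may escape from the codomain of $\f$ under iteration of $\f^{-1}$. I would handle this by noting that the hybrid equivalence in \cite{BulLom20I} gives $\Phi$ defined on all of $\Delta'_{\J}$ (up to the parabolic-like structure), intertwining the dynamics of $\f$ and $P_A$ wherever both are defined, and then appealing to Lemma \ref{lambdas} part $3$ together with part $2$ of Remark \ref{containment} to ensure that $\f^{-n}(z_0)$ is eventually contained in $\Lambda_{a,-}$, so that past a finite index the pullback measures are supported where $\Phi$ is conformal. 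Finally, the identification of the exceptional set of $P_A$ with $\Phi(\mathcal{E}_{a,-})$ follows from the characterization of exceptional points as those with finite grand orbit together with the $\Phi$-conjugacy.
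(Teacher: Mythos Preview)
Your approach is essentially the same as the paper's: transfer the Ljubich/Freire--Lopes--Ma\~n\'e equidistribution for $P_A$ through the Bullett--Lomonaco hybrid conjugacy, and define $\mu_-$ as the pullback of the measure of maximal entropy $\nu_A$. However, your handling of the ``main obstacle'' contains a genuine error. The conjugacy $\Phi$ (called $h$ in the paper) furnished by \cite{BulLom20I} is only guaranteed on a neighborhood $U$ of $\Lambda_{a,-}$, not on all of $\Delta'_{\J}$; more importantly, your claim that $\f^{-n}(z_0)$ is eventually contained in $\Lambda_{a,-}$ is false. For $z_0\notin\Lambda_{a,-}$ no backward iterate can lie in $\Lambda_{a,-}$, since $\f(\Lambda_{a,-})\subset\Lambda_{a,-}$. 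Lemma \ref{lambdas} part $3$ concerns forward escape from $\overline{\Delta'_{\J}}$ and does not yield anything of this sort either.

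The correct argument, which the paper uses, is that $\Lambda_{a,-}=\bigcap_n\F^{-n}(\overline{\Delta'_{\J}})$ is the intersection of a nested sequence of compact sets; hence for any open $U\supset\Lambda_{a,-}$ there exists $N$ with $\F^{-N}(\overline{\Delta'_{\J}})\subset U$. One then applies the conjugacy to each $\zeta\in\f^{-N}(z_0)\subset U$ separately, obtains convergence of $\frac{1}{2^{n-N}}(\f^{n-N})^*\delta_\zeta$ to $\mu_-=h^*\nu_A$ for each such $\zeta$, and averages over the $\zeta$'s with their multiplicities. Your displayed identity $\Phi_*\bigl(\frac{1}{2^n}(\f^n)^*\delta_{z_0}\bigr)=\frac{1}{2^n}(P_A^n)^*\delta_{\Phi(z_0)}$ only makes sense once all $n$-th preimages lie in the domain of $\Phi$, which is precisely what this compactness step provides.
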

Here $\mathcal{E}_{a,-}$ is the exceptional set of the two-sided restriction $\f$ of $\F$ that leaves $\Lambda_{a,-}$ invariant, described in the Section \ref{exceptional and periodic}.
\begin{proof}
From the previous section, we can see that the two-sided restriction $\f$ can extend around $1$. In \cite[Proposition 5.2]{BulLom20I}, the authors proved the existence of closed topological disks $V'_a$ and $V_a$ containing $\Lambda_{a,-}$ with $V'_a=\F^{-1}(V_a)\subset V_a$ and satisfying most properties for $\f$ to be a parabolic-like map from $V'_a$ and $V_a$. Moreover, from the proof of  \cite[Theorem B]{BulLom20I}, there exists a neighborhood $U$ of $\Lambda_{a,-}$ and a quasiconformal map $h:U\cap V'_a\to h(U)\cap V_a$ such that $h\circ f=P_A\circ h$, where $P_A$ is as in equation (\ref{parabolic quadratic}). Such conjugacy sends $\Lambda_{a,-}$ onto the filled Julia set $K_{P_A}$ of $P_A$.

From \cite{FreLopMan83} and \cite{Lju83}, there exists a measure $\tilde{\mu}_A$ on $\widehat{\mathbb{C}}$ supported on the Julia set $\mathcal{J}_{P_A}$ of $P_A$, such that for all $z_0\in\widehat{\mathbb{C}}$ not in the exceptional set $E$ of $P_A$, $\frac{1}{2^n}(P_A^n)^*\delta_{z_0}$ is weakly convergent to $\tilde{\mu}_A$.

Recall $\Lambda_{a,-}$ is the intersection of the nested compact sets $\F^{-1}(\overline{\Delta'_{\J}})$. Thus, there exists $N\in\mathbb{N}$ such that $\F^n(\overline{\Delta'_{\J}})\subset U\cap V'_a$, for all $n\geq N$. Observe as well that $h^{-1}(E)=\mathcal{E}_{a,-}$ as elements in $\mathcal{E}_{a,-}$ must have only one preimage and $h$ is a homeomorphism, and the analogous holds for $E$ and $h^{-1}$.

Now take $z_0\in\overline{\Delta'_{\J}}\setminus\mathcal{E}_{a,-}$. Then $\f^{-N}(z_0)=\F^{-N}(z_0)\subset U\cap V'_a$. Then for each $\zeta\in \f^{-N}(z_0)$, we have that $h(\zeta)\notin E$ and $\frac{1}{2^n}(P_A^n)^*\delta_{h(\zeta)}$ is weakly convergent to $\tilde{\mu}_A$. Note that $\delta_{h(\zeta)}=h_*\delta_{\zeta}$, and therefore for $n\geq N$,
$$\frac{1}{2^n}(P_A^n)^*\delta_{h(\zeta)}=\frac{1}{2^n}(h^{-1}\circ P_A^n)^*\delta_{\zeta}=\frac{1}{2^n}(\f^n\circ h^{-1})^*\delta_{\zeta}=h_*\left(\frac{1}{2^n}(\f^n)^*\delta_{\zeta}\right).$$
Since the left hand side is weakly convergent to $\tilde{\mu}_A$, then $\frac{1}{2^n}(\f^n)^*\delta_{\zeta}$ is weakly convergent to $\mu_-\coloneqq h^*\tilde{\mu}_A$, which is supported on $h(\mathcal{J}_{P_A})=\partial\Lambda_{a,-}$. Thus,
$$\frac{1}{2^n}(\f^n)^*\delta_{z_0}=\frac{1}{2^N}\sum\limits_{\zeta\in \f^{-N}(z_0)}\nu_{\f^N}(\zeta)\frac{1}{2^{n-N}}(\f^{n-N})^*\delta_{\zeta}$$
is weakly convergent to $\mu_-$ as desired.
\end{proof}

\begin{proof}[Proof of Theorem \ref{B}] Let $\mu_-$ be as in Lemma \ref{mu} and put $\m\coloneqq(\J)_*\mu_-$. Clearly $\supp(\m)=\partial\Lambda_{a,+}$. Denote by $\delta_{z_0}$ the Dirac measure at $z_0$. If $z_0\in\widehat{\mathbb{C}}\setminus(\Lambda_{a,-}\cup\E)$, there exists $n_0\in\mathbb{Z}^+\cup\lbrace 0\rbrace$ such that ${\F^{n_0}(z_0)\cap\overline{\Delta'_{\J}}=\emptyset}$ by part $3$ of Lemma \ref{lambdas}. In particular, this gives us that $\F^{n_0}(z_0)$ is contained in $\widehat{\mathbb{C}}\setminus\Delta'_{\J}$, and therefore ${\F^n(z_0)\subset\widehat{\mathbb{C}}\setminus\Delta'_{\J}}$, for all ${n\geq n_0}$. In addition, for every $z_j\in\F^{n_0}(z_0)$, $\J(z_j)\notin\mathcal{E}_{a,-}$. Thus, for such a $z_0$, we get that 
\begin{eqnarray*}
\frac{1}{2^n}(\F^n)_*\delta_{z_0}&=&\frac{1}{2^n}\sum\limits_{\zeta_j\in\F^{n_0}(z_0)}\nu_{\restr{\pi_1}{\Gamma^{(n_0)}}}(z_0,\zeta_j)(\F^{n-n_0})_*\delta_{\zeta_j}\\
&=&\frac{1}{2^{n_0}}\sum\limits_{\zeta_j\in\F^{n_0}(z_0)}\nu_{\restr{\pi_1}{\Gamma^{(n_0)}}}(z_0,\zeta_j)\frac{1}{2^{n-n_0}}(\F^{n-n_0})_*\delta_{\zeta_j}\to\mu_{\infty}\\
&=&\frac{1}{2^{n_0}}\sum\limits_{\zeta_j\in\F^{n_0}(z_0)}\nu_{\restr{\pi_1}{\Gamma^{(n_0)}}}(z_0,\zeta_j)\frac{1}{2^{n-n_0}}(\J\circ\F^{-(n-n_0)}\circ\J)_*\delta_{\zeta_j}\\
&=&\frac{1}{2^{n_0}}\sum\limits_{\zeta_j\in\F^{n_0}(z_0)}\nu_{\restr{\pi_1}{\Gamma^{(n_0)}}}(z_0,\zeta_j)(\J)_*\left(\frac{1}{2^{n-n_0}}(\F^{n-n_0})^*\delta_{\J(\zeta_j)}\right)\\
&\to&\frac{1}{2^{n_0}}\sum\limits_{\zeta_j\in\F^{n_0}(z_0)}\nu_{\restr{\pi_1}{\Gamma^{(n_0)}}}(z_0,\zeta_j)(\J)_*\mu_-=\m,
\end{eqnarray*}
weakly, as $n\to\infty$, where $\Gamma^{(n_0)}$ is the graph of $\F^{n_0}$.

Now observe that the two-sided restriction $\f$ of $\F$ sends $\Lambda_{a,-}$ to itself. Indeed, note that the sets $\F^{-n}(\overline{\Delta'_{\J}})$ are nested and if $\f(z)\notin\F^{-n}(\overline{\Delta'_{\J}})$ for some $n\in\mathbb{Z}^+$, then $z\notin\F^{n-1}(\overline{\Delta'_{\J}})\supset\Lambda_{a,-}$. Note as well that $-2\in\F^{-1}(1)$ and $1\in\Lambda_{a,-}$, so $-2\in\Lambda_{a,-}$. Define $\tilde{\f}:\Lambda_{a,-}\to(\widehat{\mathbb{C}}\setminus\overline{\Delta'_{\J}})\cup\lbrace 1\rbrace$ by $\tilde{\f}(z)=w$, where $\F(z)\setminus\lbrace \f(z)\rbrace=\lbrace w\rbrace$ for $z\neq -2$, and $\tilde{\f}(-2)=\f(-2)=1$ for $z\in\F^{-1}(1)\setminus\lbrace 1\rbrace$.
Then for $z_0\in\Lambda_{a,-}\setminus\E$ we have that $\F(z_0)=\lbrace \f(z_0),\tilde{\f}(z_0)\rbrace$, and
\begin{eqnarray*}
\frac{1}{2}(\F)_*\delta_{z_0} &=& \frac{1}{2}\delta_{\f(z_0)}+\frac{1}{2}\delta_{\tilde{\f}(z_0)}\\
\frac{1}{4}(\F^2)_*\delta_{z_0} &=& \frac{1}{4}\delta_{{\f}^2(z_0)}+\frac{1}{4}\delta_{\tilde{\f}\circ \f(z_0)}+\frac{1}{2}(\F)_*\delta_{\tilde{\f}(z_0)}\\
\frac{1}{8}(\F^3)_*\delta_{z_0} &=& \frac{1}{8}\delta_{{\f}^3(z_0)}+\frac{1}{8}\delta_{\tilde{\f}\circ {\f}^2(z_0)}+\frac{1}{4}(\F)_*\delta_{\tilde{\f}\circ \f(z_0)}+\frac{1}{2}(\F^2)_*\delta_{\tilde{\f}(z_0)}\\
& \vdots & \\
\frac{1}{2^n}(\F^n)_*\delta_{z_0} &=& \frac{1}{2^n}\delta_{{\f}^n(z_0)}+\sum\limits_{j=1}^n \frac{1}{2^j}(\F^{n-j})_*\delta_{\tilde{\f}\circ {\f}^{j-1}(z_0)}.
\end{eqnarray*}

Since $\frac{1}{2^n}\delta_{{\f}^n(z_0)}$ has mass $\frac{1}{2^n}$, then it is weakly convergent to a measure with zero mass. Put 
$$\mu_{n,j}\coloneqq\frac{1}{2^j}(\F^{n-j})_*\delta_{\tilde{\f}\circ {\f}^{j-1}(z_0)}\mbox{ and }\mu_n\coloneqq\frac{1}{2^n}(\F^n)_*\delta_{z_0} - \frac{1}{2^n}\delta_{{\f}^n(z_0)}=\sum\limits_{j=1}^n\mu_{n,j}.$$ If we truncate the sum defining $\mu_n$ at $N<n$, we obtain a sequence $\mu_n^{(N)}=\sum\limits_{j=1}^N\mu_{n,j}$ satisfying $\mu_n^{(N)}\to\sum\limits_{j=1}^N\frac{1}{2^j}\m=(1-2^{-N})\m$ weakly. Let $\varphi:\widehat{\mathbb{C}}\to\mathbb{R}$ be continuous and $\varepsilon>0$. Choose $N\in\mathbb{Z}^+$ big so that $\frac{3}{2^N}\sup|\varphi|<\varepsilon$, and $n>N$ such that 
$$\Big|\int\varphi d\mu_n^{(N)}-(1-2^{-N})\int\varphi d\m\Big|<\frac{1}{2^N}\sup|\varphi|$$
by the convergence $\mu_n^{(N)}\to(1-2^{-N})\m$. Then
\begin{eqnarray*}
\Big|\int\varphi d\mu_n^{(N)}-\int\varphi d\m\Big|&\leq&\Big|\int\varphi d\mu_n^{(N)}-\left(1-\frac{1}{2^N}\right)\int\varphi d\m\Big|+\frac{1}{2^N}\Big|\int\varphi d\m\Big|\\
&\leq&\frac{1}{2^{N-1}}\sup|\varphi|
\end{eqnarray*}

Since $\mu_n-\mu_n^{(N)}=\sum\limits_{j=N+1}^n\mu_{n,j}$ has mass at most $\frac{1}{2^N}$, we get
\begin{eqnarray*}
\Big|\int\varphi d\mu_n-\int\varphi d\m\Big|&\leq&\Big|\int\varphi d\mu_n-\int\varphi d\mu_n^{(N)}\Big|+\Big|\int\varphi d\mu_n^{(N)}-\int\varphi d\m\Big|\\
&\leq&\frac{3}{2^N}\sup|\varphi|<\varepsilon.
\end{eqnarray*}
This proves that for all $\varphi:\widehat{\mathbb{C}}\to\mathbb{R}$, we have that $\int\varphi d\mu_n\to\int\varphi d\m$ as $n\to\infty$ and hence $\mu_n\to\m$. Since $\frac{1}{2^n}(\F^n)_*\delta_{z_0}=\frac{1}{2^n}\delta_{{\f}^n(z_0)}+\mu_n$, we obtain that $\frac{1}{2^n}(\F^n)_*\delta_{z_0}$ is weakly convergent to $\m$, as desired.

Applying again the push-forward by $\J$ we get that for every $z_0\notin\E$, $\frac{1}{2^n}(\F^n)^*\delta_{z_0}$ is weakly convergent to $\mu_-$.
\end{proof}

Finally, we show the asymptotic equidistribution of periodic points of $\F$ of order $n$ with respect to the probability measure $\frac{1}{2}(\mu_-+\mu_+)$.

\begin{proof}[Proof of Theorem \ref{C}]

From Lemma \ref{periodic}, we have that all periodic points lie in $\Lambda_{a,-}\cup\Lambda_{a,+}$ and
$$|\Per_n(\F)\cap\Lambda_{a,-}|=|\Per_n(\F)\cap\Lambda_{a,+}|=|\Per_n(\f)|.$$
Denote this number by $d_n$. Since $1\in\F(1)$ and Lemma \ref{periodic} says $\Lambda_{a,-}\cap\Lambda_{a,+}=\lbrace 1\rbrace$, then 
$$(\Per_n(\F)\cap\Lambda_{a,-})\cap(\Per_n(\F)\cap\Lambda_{a,+})=\lbrace 1\rbrace.$$
Thus, $|\Per_n(\F)|=2d_n-1$. Since the conjugacy $h$ between $\f$ and the quadratic rational map $P_A$ sends $\Lambda_{a,-}$ onto the filled Julia set $K_{P_A}$ of $P_A$, which contains all periodic points of $P_A$ by Remark \ref{filled}. Thus, $\Per_n(P_A)=h(\Per_n(\F)\cap\Lambda_{a,-})$, and 
$$|\Per_n(\f)|=|\Per_n(P_A)|=d_n$$
 as well. We have that $\lim\limits_{n\to\infty}d_n=\infty$ (see page 363 in \cite{Lju83}). Then

\begin{eqnarray*}
\frac{1}{2d_n-1}\sum\limits_{z\in\Per_n(\F)\cap\Lambda_{a,-}}\delta_z&=&\frac{1}{2d_n-1}\sum\limits_{z\in\Per_n(\f)}\delta_z\\
&=&h^*\left(\frac{1}{2d_n-1}\sum\limits_{z\in\Per_n(\f)}\delta_{h(z)}\right)\\
&=&h^*\left(\frac{d_n}{2d_n-1}\frac{1}{d_n}\sum\limits_{\zeta\in\Per_n(P_A)}\delta_{\zeta}\right),
\end{eqnarray*}
which is weakly convergent to $\frac{1}{2}h^*\widetilde{\mu}_A=\frac{1}{2}\mu_-$ by  Corollary to Theorem 3 in \cite{Lju83}. Thus,

$$\frac{1}{2d_n-1}\sum\limits_{z\in\Per_n(\F)\cap\Lambda_{a,+}}\delta_z=\J^*\left(\frac{1}{2d_n-1}\sum\limits_{\J(z)\in\Per_n(\F)\cap\Lambda_{a,-}}\delta_{\J(z)}\right)$$
is weakly convergent to $\frac{1}{2}\J^*\mu_-=\frac{1}{2}\mu_+$, and
$$\frac{1}{|\Per_n(\F)|}\sum\limits_{z\in \Per_n(\F)}\delta_z=\frac{1}{2d_n-1}\sum\limits_{\substack{z\in\Per_n(\F)\\z\in\Lambda_{a,-}}}\delta_z+\frac{1}{2d_n-1}\sum\limits_{\substack{z\in\Per_n(\F)\\z\in\Lambda_{a,+}}}\delta_z-\frac{1}{2d_n-1}\delta_1$$  converges weakly to $\frac{1}{2}(\mu_-+\mu_+)$, as desired.

Finally, we proceed to show that the equidistribution still holds counting with multiplicity. Observe that  the only points of $\Lambda_{a,-}$ that are not in the interior of the topological conjugacy $h$ are $-2$ and $1$, and we have that $-2$ is not a periodic point for $\F$. We claim that for every $z_0\in\Per_n(f)\setminus\lbrace 1\rbrace$, we have that 
$$\nu_{\restr{\pi_1}{\graph(\f^n)\cap\mathfrak{D}_{\widehat{\mathbb{C}}}}}(z_0,z_0)=\nu_{\restr{\pi_1}{\graph(P^n_A)\cap\mathfrak{D}_{\widehat{\mathbb{C}}}}}(h(z_0),h(z_0)).$$
Indeed, the topological conjugacy $h$ sends attracting periodic points to attracting periodic points, so if $|(\f^n)'(z_0)|<1$, then $|(P^n_A)'(h(z_0))|<1$ as well. Thus, both $z_0$ and $h(z_0)$ have multiplicity $1$ as a periodic point in this case. The same happens for $|(\f^n)'(z_0)|>1$. Finally, if $|(\f^n)'(z_0)|=|(P^n_A)'(h(z_0))|=1$, Naishul's Theorem \cite{Nai82} (later re-proven by P\'erez-Marco in \cite{Per97}) shows that $(\f^n)'(z_0)=(P^n_A)'(h(z_0))$. Moreover, attracting directions are preserved under topological conjugacy, so the parabolic fixed point $z_0$ of $\F$ and the parabolic fixed point $h(z_0)$ of $P_A$ have the same number of attracting directions. Therefore, both $z_0$ and $h(z_0)$ have the same multiplicity as periodic points of order $n$ of $\f$ and $P_A$, respectively. Therefore,

\begin{eqnarray*}
\sum\limits_{\substack{z\in\Per_n(\F)\cap\Lambda_{a,-} \\ z\neq 1}}\nu_{\restr{\pi_1}{\Gamma_a^{(n)}\cap\mathfrak{D}_{\widehat{\mathbb{C}}}}}(z,z)&=&\sum\limits_{\substack{\zeta\in\Per_n(P_A)\\\zeta\neq\infty}}\nu_{\restr{\pi_1}{\graph(P^n_A)\cap\mathfrak{D}_{\widehat{\mathbb{C}}}}}(\zeta,\zeta)\\
&=&(2^n+1)-\nu_{\restr{\pi_1}{\graph(P^n_A)\cap\mathfrak{D}_{\widehat{\mathbb{C}}}}}(\infty,\infty).
\end{eqnarray*}

From Lemma \ref{periodic} part $2$ and given that $\J$ is a Mobius transformation, we have that
$$\sum\limits_{\substack{z\in\Per_n(\F)\cap\Lambda_{a,+} \\ z\neq 1}}\nu_{\restr{\pi_1}{\Gamma_a^{(n)}\cap\mathfrak{D}_{\widehat{\mathbb{C}}}}}(z,z)=(2^n+1)-\nu_{\restr{\pi_1}{\graph(P^n_A)\cap\mathfrak{D}_{\widehat{\mathbb{C}}}}}(\infty,\infty)$$
as well.

Since the multiplicity of a periodic point is the multiplicity as a periodic point of its minimal period and since $\phi^{-1}(0)=\infty$, we have that
$$\nu_{\restr{\pi_1}{\graph(P^n_A)\cap\mathfrak{D}_{\widehat{\mathbb{C}}}}}(\infty,\infty)=\left\lbrace\begin{matrix}3& \mbox{ if }A=0\\2&\mbox{ otherwise}\end{matrix}.\right.$$
Similarly, from the equations for $g^{\psi}$ listed in Remark \ref{taylor}, we have that
$$\nu_{\restr{\pi_1}{\Gamma_a^{(n)}\cap\mathfrak{D}_{\widehat{\mathbb{C}}}}}(1,1)=\left\lbrace\begin{matrix}
4& \mbox{ if }a=7\hspace{0.75cm}\\
2& \mbox{ if }a\in\mathcal{K}\setminus\lbrace 7\rbrace.\end{matrix}\right.$$

In \cite[Corollary 4.3]{BulLom20II} the authors showed that for $a=7$, the member of the family of quadratic rational maps
$$\left\lbrace P_A(z)=z+1/z+A|A\in\mathbb{C}\right\rbrace$$
that is conjugate to $\f$ is $P_0$. Therefore, for all $a\in\mathbb{C}\setminus\lbrace 1\rbrace$, $|a-4|\leq 3$,

\begin{eqnarray*}
\sum\limits_{z\in\Per_n(\F)}\nu_{\restr{\pi_1}{\Gamma_a^{(n)}\cap\mathfrak{D}_{\widehat{\mathbb{C}}}}}(z,z)&=&2\left(2^n+1-\nu_{\restr{\pi_1}{\graph(P^n_A)\cap\mathfrak{D}_{\widehat{\mathbb{C}}}}}(\infty,\infty)\right)+\nu_{\restr{\pi_1}{\Gamma_a^{(n)}\cap\mathfrak{D}_{\widehat{\mathbb{C}}}}}(1,1)\\&=&2^{n+1}
\end{eqnarray*}
and we have that
$$\frac{1}{2^n}\sum\limits_{\substack{z\in\Per_n(\F)\cap\Lambda_{a,-}\\z\neq 1}}\nu_{\restr{\pi_1}{\Gamma_a^{(n)}\cap\mathfrak{D}_{\widehat{\mathbb{C}}}}}(z,z)\delta_z=h^*\left(\frac{1}{2^n}\sum\limits_{\substack{\zeta\in\Per_n(P_A)\\\zeta\neq\infty}}\nu_{\restr{\pi_1}{\graph(P^n_A)\cap\mathfrak{D}_{\widehat{\mathbb{C}}}}}(\zeta,\zeta)\delta_{\zeta}\right),$$
which also converges weakly to $\mu_-$ by \cite[Theorem 3]{Lju83}. Using that $\J$ preserves multiplicities and that $\J^*\mu_-=\mu_+$, we have that
$$\frac{1}{2^n}\sum\limits_{\substack{z\in\Per_n(\F)\cap\Lambda_{a,+}\\z\neq 1}}\nu_{\restr{\pi_1}{\Gamma_a^{(n)}\cap\mathfrak{D}_{\widehat{\mathbb{C}}}}}(z,z)\delta_z=\J^*\left(\frac{1}{2^n}\sum\limits_{\substack{z\in\Per_n(\F)\cap\Lambda_{a,-}\\z\neq 1}}\nu_{\restr{\pi_1}{\Gamma_a^{(n)}\cap\mathfrak{D}_{\widehat{\mathbb{C}}}}}(z,z)\delta_z\right)$$
is weakly convergent to $\mu_+$. Since $\dfrac{1}{2^n}\nu_{\restr{\pi_1}{\Gamma_a^{(n)}\cap\mathfrak{D}_{\widehat{\mathbb{C}}}}}(1,1)\delta_1$ has total mass $\dfrac{1}{2^n}$ and
$$\Per_n(\F)=\lbrace 1\rbrace\cup\left(\Per_n(\F)\cap\Lambda_{a,-}\setminus\lbrace 1\rbrace\right)\cup\left(\Per_n(\F)\cap\Lambda_{a,+}\setminus\lbrace 1\rbrace\right)$$
is a disjoint union, then
$$\frac{1}{2^{n+1}}\sum\limits_{z\in\Per_n(\F)}\nu_{\restr{\pi_1}{\Gamma_a^{(n)}\cap\mathfrak{D}_{\widehat{\mathbb{C}}}}}(z,z)\delta_z$$
is weakly convergent to $\frac{1}{2}(\mu_-+\mu_+)$.
\end{proof}

In what follows we will introduce terminology and prove Theorem \ref{D}.
\\
\begin{definition} A holomorphic correspondence $F$ with graph $\Gamma$ is said to be \emph{postcritically finite} if for all $w\in B_2(\Gamma)$, there exist $0\leq m<n$ so that
\begin{equation}\label{preperiodic}
F^m(w)\cap F^n(w)\neq\emptyset.
\end{equation}
\end{definition}

Equivalently, for every $w\in B_2(\Gamma)$, there exists $m\geq 0$ and $z\in F^m(w)$ so that $z\in\Per_n(F)$ for some $n\geq 1$. This definition generalizes that of postcritically finite rational maps. Indeed, if $F$ is a rational map that is postcritically finite as a correspondence, then we have that every $w\in B_2(\Gamma)$ is pre-periodic. Since $B_2(\Gamma)=F(\critpt(F))$, then every critical point is pre-periodic as well. Hence $F$ is postcritically finite as a rational map.

If $F$ is a holomorphic correspondence and $z_0\in\Per_n(F)$, there exists a cycle
\begin{equation}\label{cycle}
(z_0,z_1,z_2,\cdots,z_{n-1})
\end{equation}
so that $z_i\in F(z_{i-1})$ and $z_0\in F(z_{n-1})$. 

\begin{definition} We say that a holomorphic correspondence with graph $\Gamma$ is \emph{superstable} if there exists $\alpha\in A_2(\Gamma)$ and a cycle (\ref{cycle}) satisfying that $z_0=\pi_1(\alpha)$ and $z_1=\pi_2(\alpha)$. 
\end{definition}

Observe that if $F$ is a rational map, then $\pi_1(\alpha)$ corresponds to a critical point and $\pi_2(\alpha)$ corresponds to its critical value. Therefore the definition of superstable correspondences reduces to $F$ having a superstable cycle, meaning a cycle containing a critical point.

In the context of the family $\lbrace\F\rbrace_{a\in\mathcal{K}}$, Proposition \ref{b2} says that 
$$B_2(\Gamma_a)=\left\lbrace \frac{a+1}{2},\frac{4a+2}{a+5},\frac{2}{3-a}\right\rbrace.$$ 
Observe that $\frac{a+1}{2}=\J(\infty)\notin\Lambda_{a,-}\cup\Lambda_{a,+}$. Indeed, since $\infty$ is fixed by $\cov$ and $(\Delta'_{\cov},\Delta'_{\J})$ is a Klein combination pair, then $\infty\in(\widehat{\mathbb{C}}\setminus\Delta'_{\cov})\setminus\lbrace 1\rbrace\subset\Delta'_{\J}$, so $\J(\infty)\notin\Lambda_{a,-}\subset\overline{\Delta'_{\J}}$. On the other hand, $\F^{-1}(\J(\infty))=\lbrace\infty\rbrace$ does not intersect $\Lambda_{a,+}$, so $\J(\infty)\notin\Lambda_{a,+}$. By Lemma (\ref{periodic}), $\frac{a+1}{2}$ cannot satisfy (\ref{preperiodic}). Therefore there are no parameters $a\in\mathcal{K}$ for which $\F$ is postcritically finite.

On the other hand, $\F^{-(n+1)}(\frac{4a+2}{a+5})=\F^{-n}(1)\subset\Lambda_{a,-}$ and $\frac{4a+2}{a+5}\in\Lambda_{a,+}$, so $\frac{4a+2}{a+5}$ is not periodic. Therefore, $\F$ is superstable if and only if $\frac{2}{3-a}$ is periodic. Since $\F^{-1}\left(\frac{2}{3-a}\right)=\lbrace -1\rbrace$ and $\F^{-1}=\f^{-1}$, then $\F$ is superstable if and only if $-1$ is a critical point of $\f$ that is periodic. We say that the parameter $a\in\mathcal{K}$ is \emph{superstable} whenever $\F$ is superstable.

We consider the quadratic family
$$\lbrace p_c(z)=z^2+c|c\in\mathbb{C}\rbrace$$
and let $\hat{P}_n\coloneqq\lbrace c\in\mathbb{C}|p^n_c(0)=0\rbrace$ be the set of superstable parameters of order $n$. Note that $\hat{P}_n$ has $2^{n-1}$ points, counted with multiplicity, and that it is contained in the Mandelbrot set $\mathcal{M}$.  Levin proved in \cite{Lev90} that 
$$\lim\limits_{n\to\infty}\frac{1}{2^{n-1}}\sum\limits_{c\in\hat{P}_n}\delta_{c}$$
converges to a measure $m_{\bif}$ on $\mathcal{M}$, which we call the \emph{bifurcation measure}.

Let $\mathcal{M}_\Gamma$ denote the connectedness locus of the family $\lbrace\F\rbrace_{a\in\mathcal{K}}$ and and $\mathcal{M}_1$ denote the Parabolic Mandelbrot set. The conjugacy shown in \cite{BulLom20I} between $\f$ and $P_A$ induces a map $a\mapsto 1-A^2$ from $\mathcal{M}_{\Gamma}$ and $\mathcal{M}_1$. From \cite[Main Theorem]{BulLom20II}, that map is a homeomorphism. On the other hand, \cite[Main Theorem]{PetRoe21} says that $\mathcal{M}_1$ is homeomorphic to $\mathcal{M}$. Moreover, both homeomorphisms constructed preserve the type of dynamics associated to the parameter (see \cite[Section 1]{PetRoe21}). Therefore, there is a homeomorphism $\Psi:\mathcal{M}\to\mathcal{M}_\Gamma$ that gives a one-to-one correspondence between superstable parameters of $\mathcal{M}_\Gamma$ and superstable parameters of $\mathcal{M}$. Pushing the bifurcation measure forward through $\Psi$, and writing
$$\hat{P}^{\Gamma}_n=\lbrace a\in\mathcal{K}\mbox{ superstable }|\f^n(-1)=-1\rbrace,$$
we obtain the statement in Theorem \ref{D}.

\bibliographystyle{plain}
\bibliography{References}
\end{document}